\theoremstyle{plain}
\newtheorem{theorem}{\noindent\bf Theorem}[section]
\theoremstyle{remark}
\newtheorem{remark}[theorem]{\noindent\bf Remark}
\numberwithin{equation}{section}
\newcommand{\bc}{\begin{center}}
\newcommand{\ec}{\end{center}}
\newcommand{\ba}{\begin{array}}
\newcommand{\ea}{\end{array}}
\newcommand{\beqn}{\begin{eqnarray}}
\newcommand{\eeqn}{\end{eqnarray}}
\newcommand{\be}{\begin{equation}}
\newcommand{\ee}{\end{equation}}
\newcommand{\ben}{\begin{enumerate}}
\newcommand{\een}{\end{enumerate}}
\title{\bf On Bilinear Hardy Inequality and Corresponding Geometric Mean Inequality}
\author{Amiran Gogatishvili, Pankaj Jain and Saikat Kanjilal}
\date{}
\begin{document}
 \maketitle
 \noindent\textbf{Abstract.}
The main aim of this paper to provide several scales of equivalent conditions for the bilinear Hardy inequalities in the case $1< q, p_1, p_2<\infty$ with $q \geq \max(p_1,p_2)$.

\medskip
\noindent\textbf{2010 AMS Subject Classification. 26D10, 46E35.}

\medskip
\noindent\textbf{Key words and Phrases:} Hardy inequality, bilinear Hardy inequality, integral conditions, equivalent conditions, geometric mean inequality.

\section{Introduction}

Let $\mathfrak{M}$ denote the set of all Lebesgue measurable functions  on $(a,b), \, -\infty\leq a < b\leq \infty$,
$\mathfrak{M}^{+}\subset\mathfrak{M}$ is the subset of all non-negative functions.

Let $ u,v, \in \mathfrak{M}^{+},  \;  0<p, q\leq \infty,\, p \geq 1 $.
Denote $p':=\frac{p}{p-1}$ and write
\begin{equation}
\label{e1}
U(x)= \int_{x}^{b} u(t)\, dt,\quad  V(x)= \int_{a}^{x}v^{1-p'}(t)\, dt,
\end{equation}

and assume that $U(x)< \infty, \, V(x)< \infty$ for almost everywhere (a.e.) $x \in (a,b)$.

Consider the one dimensional Hardy inequality
\begin{equation}
\label{H1}
\left(\int_a^b\left( \int_a^xf(t)\, dt \right)^q
u(x)\, dx\right)^\frac{1}{q}\leq C\left(\int_a^b
f^{p}(x)v(x)\, dx\right)^\frac{1}{p},  \quad f\in \mathfrak{M}^{+}.
\end{equation}
It is known that the inequality \eqref{H1} is characterized by the Muckenhoupt condition \cite{Mucken} in the case  $1<p\leq q<\infty$ which is given by
\begin{equation}
\label{MC}
A_M:= \sup_{a < x < b} A_M(x)= \sup_{a<x<b}U^\frac{1}{q}(x)V^\frac{1}{p'}(x)<\infty.
\end{equation}
It is noted that the Muckenhoupt condition $A_M < \infty$ is not unique. It can be replaced by several scales of equivalent conditions (see \cite{GKPW}, \cite{GKP}). Preciously, the following result was proved in \cite{GKP}:
\medskip

\textbf{Theorem A.} {\it Let $-\infty \leq a < b \leq \infty$, $\alpha, \beta,s$ be positive numbers and $f,g, h$ $\in \mathfrak{M}^{+}$. Denote
\begin{align} \label{e1.4}
F(x):=  \int_{x}^{b}f(t)\, dt, \quad G(x):=  \int_{a}^{x}g(t)\, dt,
\end{align}
and suppose that $F(x) < \infty$, $G(x) < \infty$ for every $x \in (a,b)$. Furthermore, denote
\allowdisplaybreaks[4]
\begin{align*}
&B_{1}(x; \alpha, \beta):= F^{\alpha}(x)G^{\beta}(x);\\
&B_{2}(x; \alpha, \beta, s):= \left({\int_{x}^{b}f(t)G^{\frac{\beta-s}{\alpha}}(t)\, dt}\right)^{\alpha}G^{s}(x);\\
&B_{3}(x; \alpha, \beta, s):= \left({\int_{a}^{x}g(t)F^{\frac{\alpha-s}{\beta}}(t)\, dt}\right)^{\beta}F^{s}(x);\\
&B_{4}(x; \alpha, \beta, s):= \left(\int_{a}^{x}f(t)G^{\frac{\beta+s}{\alpha}}\, dt\right)^{\alpha}G^{-s}(x); \\
&B_{5}(x; \alpha, \beta, s):= \left({\int_{x}^{b}g(t)F^{\frac{\alpha+s}{\beta}}(t)\, dt}\right)^{\beta}F^{-s}(x);\\
&B_{6}(x; \alpha, \beta, s):= \left(\int_{x}^{b}f(t)G^{\frac{\beta}{\alpha+s}}(t)\, dt\right)^{\alpha+s}F^{-s}(x);\\
&B_{7}(x; \alpha, \beta, s):= \left(\int_{a}^{x}g(t)F^{\frac{\alpha}{\beta+s}}(t)\, dt\right)^{\beta+s}G^{-s}(x); \\
&B_{8}(x; \alpha, \beta, s):= \left(\int_{a}^{x}f(t)G^{\frac{\beta}{\alpha-s}}(t)\, dt\right)^{\alpha-s}F^{s}(x), \quad \alpha > s; \\
&B_{9}(x; \alpha, \beta, s):= \left(\int_{x}^{b}f(t)G^{\frac{\beta}{\alpha-s}}(t)\, dt\right)^{\alpha-s}F^{s}(x), \quad \alpha < s; \\
&B_{10}(x; \alpha, \beta, s):= \left(\int_{x}^{b}g(t)F^{\frac{\alpha}{\beta-s}}(t)\, dt\right)^{\beta-s}G^{s}(x),\quad \beta>s; \\
&B_{11}(x; \alpha, \beta, s):= \left(\int_{a}^{x}g(t)F^{\frac{\alpha}{\beta-s}}(t)\, dt\right)^{\beta-s}G^{s}(x),\quad \beta<s; \\
&B_{12}(x; \alpha, \beta, s; h):= \left(\int_{x}^{b}f(t)h^{\frac{\beta-s}{\alpha}}(t)\, dt\right)^{\alpha}\Big(h(x)+G(x)\Big)^{s},\quad \beta<s; \\
& B_{13}(x; \alpha, \beta, s; h):= \left(\int_{a}^{x}g(t)h^{\frac{\alpha-s}{\beta}}(t)\, dt\right)^{\beta}\Big(h(x)+F(x)\Big)^{s}, \quad \alpha<s;  \\
& B_{14}(x; \alpha, \beta, s; h):= \left(\int_{a}^{x}f(t)\Big(h(t)+G(t)\Big)^{\frac{\beta+s}{\alpha}}\, dt\right)^{\alpha}h^{-s}(x); \\
& B_{15}(x; \alpha, \beta, s; h):= \left(\int_{x}^{b}g(t)\Big(h(t)+F(t)\Big)^{\frac{\alpha+s}{\beta}}\, dt\right)^{\beta}h^{-s}(x).
\end{align*}
The numbers $\displaystyle B_{1}(\alpha, \beta):= \sup_{a < x < b}B_{1}(x; \alpha, \beta)$, $\displaystyle B_{i}(\alpha, \beta, s):= \sup_{a < x < b} B_{i}(x; \alpha, \beta, s)$ $(i= 2, 3, ... , 11)$ and $\displaystyle B_{i}(\alpha, \beta, s):= \inf_{h \geq 0}\sup_{a < x < b} B_{i}(x; \alpha, \beta, s; h)$ $(i= 12, 13, 14, 15)$ are mutually equivalent.
}

For $\alpha= \frac{1}{q}$, $\beta= \frac{1}{p'}$, $\displaystyle F(x)= \int_{x}^{b} u(t)\, dt$ and $\displaystyle G(x)= \int_{a}^{x}v^{1-p'}(t)\, dt$, we find that
\begin{equation*}
B_1\left(x; \frac{1}{q}, \frac{1}{p'}\right)= A_M(x)
\end{equation*}
so that the condition
\begin{equation*}
\sup_{a<x<b}B_1\left(x; \frac{1}{q}, \frac{1}{p'}\right) < \infty
\end{equation*}
in Theorem A is the Muckenhoupt condition \eqref{MC}. Consequently, the other conditions in Theorem A are equivalent to \eqref{MC}. Let us mention that similar equivalent conditions for the inequality \eqref{H1} for the case $1<q<p<\infty$ were obtained by Persson, Stepanov and Wall \cite{PSW}.

Towards the first aim of this paper, we provide a stronger version of Theorem A by proving that, in Theorem A, the supremum over the interval $(a,b)$ can be considered in certain truncated intervals. This is done in Section 2.

Recently, Ca\~{n}estro \textit{et} \textit{al}. \cite{Agui} (see also \cite{Kr1}) considered the weighted bilinear Hardy inequality
\begin{align}
\label{H2}
\left(\int_a^b\left( \int_a^xf(t)\, dt\right)^q
\left( \int_a^xg(t)\, dt\right)^q
u(x)dx\right)^\frac{1}{q}&\leq C\left(\int_a^b
f^{p_1}(x)v_1(x)\, dt\right)^\frac{1}{p_1} \times \nonumber \\
& \quad \quad \times \left(\int_a^b
g^{p_2}(x)v_2(x)\right)^\frac{1}{p_2}
\end{align}
and proved the following:
\medskip

\textbf{Theorem B.} \emph{Let $1< q, p_1, p_2<\infty$ with $q \geq \max(p_1,p_2)$. Let $u, v_1, v_2$ $\in \mathfrak{M}^{+}$. Then there exits a positive constant $C$ such that the inequality \eqref{H2} holds for all $f, g \in \mathfrak{M}^{+}$ if and only if $\mathcal{D} < \infty$, where}
\begin{equation}
\label{BHC}
\mathcal{D}:= \sup_{a<x<b}\left(\int_{x}^{b} u(t)\, dt\right)^{\frac{1}{q}}\left(\int_{a}^{x}v_1^{1-p_1'}(t)\, dt\right)^{\frac{1}{p_1'}}\left(\int_{a}^{x}v_2^{1-p_2'}(t)\, dt\right)^{\frac{1}{p_2'}}.
\end{equation}

The next aim of this paper is to prove a result of the type of Theorem A in respect of the bilinear Hardy inequality \eqref{H2}. Some of these equivalent conditions have recently been proved in \cite{KPS}. In this paper, we give different proofs of some of the conditions proved in \cite{KPS} and moreover, we provide several new equivalent conditions. This is done in Section 3.

Finally, in Section 4, we give a characterization for the bilinear type geometric mean inequality

\begin{align*}
	\label{BGM}
	\left(\int_{0}^{\infty}\left(Tf\right)^q(x)\left(Tg\right)^q(x)u(x)\, dx\right)^{\frac{1}{q}} &\leq C\left(\int_{0}^{\infty}f^{p_1}(x)v_1(x)\, dx\right)^{\frac{1}{p_1}}\nonumber\\
	&\qquad\times\left(\int_{0}^{\infty}f^{p_2}(x)v_2(x)\, dx\right)^{\frac{1}{p_2}},
\end{align*}
where
\begin{align*}
	(Tf)(x):= \exp\left(\frac{1}{x}\int_{0}^{x} \ln(f(t))\, dt\right), \quad f \in \mathfrak{M}^{+}.
\end{align*}

\section{Improvement of the Theorem A}

We prove the following theorem:

\begin{theorem} \label{IThA}
Under the setting of Theorem A, for $x\in(a,b)$, the following hold:
\begin{itemize}
\item[(i)] $\displaystyle \sup_{x<y<b} B_1(y;\alpha, \beta) \approx \sup_{x<y<b} B_2(y;\alpha, \beta, s); $
\item[(ii)] $\displaystyle \sup_{x<y<b} B_1(y;\alpha, \beta) \approx \sup_{x<y<b}  \left(\int_{x}^{y}f(t)G^{\frac{\beta+s}{\alpha}}(t)\, dt\right)^{\alpha}G^{-s}(y); $
\item[(iii)] $\displaystyle \sup_{x<y<b} B_1(y;\alpha, \beta) \approx \sup_{x<y<b} B_6(y;\alpha, \beta, s); $
\item[(iv)] $\displaystyle \sup_{x<y<b} B_1(y;\alpha, \beta) \approx \sup_{x<y<b}  \left(\int_{x}^{y}f(t)G^{\frac{\beta}{\alpha-s}}(t)\, dt\right)^{\alpha-s}F^{s}(y) , \hskip 0.5em  \alpha>s; $
\item[(v)] $\displaystyle \sup_{x<y<b} B_1(y;\alpha, \beta) \approx \sup_{x<y<b} B_9(y;\alpha, \beta, s), \hskip 0.5em \alpha<s; $
\item[(vi)] $\displaystyle \sup_{x<y<b} B_1(y;\alpha, \beta) \approx \inf_{h \geq 0}\sup_{x<y<b} B_{12}(y;\alpha, \beta, s; h), \hskip 0.5em \beta<s; $
\item[(vii)] $\displaystyle \sup_{x<y<b} B_1(y;\alpha, \beta) \approx \inf_{h \geq 0} \sup_{x<y<b}h(y) ^{-s} \left(\int_x^y f(z)\Big(h(z)+G(z)\Big)^{\frac{\beta+s}{\alpha}}dz\right)^{\alpha}; $
\item[(viii)] $\displaystyle \sup_{a<y<x} B_1(y;\alpha, \beta) \approx \sup_{a<y<x} B_3(y;\alpha, \beta, s); $
\item[(ix)] $ \displaystyle \sup_{a<y<x} B_1(y;\alpha, \beta) \approx \sup_{a<y<x} \left({\int_{y}^{x}g(t)F^{\frac{\alpha+s}{\beta}}(t)\, dt}\right)^{\beta} F^{-s}(y); $
\item[(x)] $\displaystyle \sup_{a<y<x} B_1(y;\alpha, \beta) \approx \sup_{a<y<x} B_7(y;\alpha, \beta, s); $
\item[(xi)] $\displaystyle \sup_{a<y<x} B_1(y;\alpha, \beta) \approx \sup_{a<y<x}  \left(\int_{y}^{x}g(t) F ^{\frac{\alpha}{\beta-s}}(t)\, dt\right)^{\beta-s}G^{s}(y) , \hskip 0.5em  \beta>s; $
\item[(xii)] $\displaystyle \sup_{a<y<x} B_1(y;\alpha, \beta) \approx \sup_{a<y<x} B_{11}(y;\alpha, \beta, s), \hskip 0.5em \beta<s; $
\item[(xiii)] $\displaystyle \sup_{a<y<x} B_1(y;\alpha, \beta) \approx \inf_{h \geq 0}\sup_{a<y<x} B_{13}(y;\alpha, \beta, s; h), \hskip 0.5em \alpha<s; $
\item[(xiv)] $\displaystyle \sup_{a<y<x} B_1(y;\alpha, \beta) \approx \inf_{h \geq 0} \sup_{a<y<x}h(y) ^{-s} \left(\int_y^x g(z)\Big(h(z)+F(z)\Big)^{\frac{\alpha+s}{\beta}}dz\right)^{\beta} $.
\end{itemize}
\end{theorem}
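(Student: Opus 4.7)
The fourteen items split into two symmetric groups: (viii)--(xiv) are mirror images of (i)--(vii) under the formal swap $a\leftrightarrow b$, $f\leftrightarrow g$, $\alpha\leftrightarrow\beta$, $F\leftrightarrow G$, so it suffices to establish (i)--(vii). My plan is to revisit the chains of inequalities behind Theorem A from \cite{GKP} and verify that, once $y$ is localised to the upper truncation $(x,b)$, every estimate used there already depends only on the values of $f, g, F, G$ on $(x,b)$. Since the definitions $F(y)=\int_y^b f$ and $G(y)=\int_a^y g$ are unchanged, each $B_i(y;\cdot)$ is the same function of $y$ as in Theorem A; only the range of the supremum shrinks. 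Writing $M_x:=\sup_{x<y<b}F^{\alpha}(y)G^{\beta}(y)$, the task reduces to rerunning the \cite{GKP} arguments with the global quantity $\sup_{a<y<b}B_1$ replaced by $M_x$ throughout.

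\textbf{Model case (i).} Fix $y\in(x,b)$. For every $t\in(y,b)\subseteq(x,b)$, the defining bound $F(t)\le M_x^{1/\alpha}G^{-\beta/\alpha}(t)$ is in force. A single integration by parts, using $F(b)=0$, gives
\begin{equation*}
\int_y^b f(t)\, G^{(\beta-s)/\alpha}(t)\,dt = F(y)G^{(\beta-s)/\alpha}(y) + \tfrac{\beta-s}{\alpha}\int_y^b F(t)\, G^{(\beta-s-\alpha)/\alpha}(t)\, g(t)\,dt.
\end{equation*}
Substituting the local bound on $F(t)$ and using $\int_y^b g(t)G^{-(s+\alpha)/\alpha}(t)\,dt\le \tfrac{\alpha}{s}G^{-s/\alpha}(y)$ (valid for $s>0$) bounds the right-hand side by a constant multiple of $M_x^{1/\alpha}G^{-s/\alpha}(y)$, so $B_2(y;\alpha,\beta,s)\lesssim M_x$. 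The reverse inequality $B_1(y)\lesssim B_2(y)$ is pointwise when $\beta\ge s$ (because $G(t)\ge G(y)$ for $t\ge y$), and when $\beta<s$ follows by rearranging the same identity and using that the second integral on the right is nonnegative. In both cases only data on $(y,b)\subseteq(x,b)$ appears.

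\textbf{Remaining cases and the infimum items.} Items (ii)--(v) arise from the same identity via the exponent substitutions that in \cite{GKP} relate $B_1$ to $B_4,B_6,B_8,B_9$; the integrations again live on $(y,b)$ or on subintervals $(x,y)\subseteq(x,b)$, so no data outside the truncation enters. For the infimum items (vi), (vii), I would insert the explicit near-optimal choice
\begin{equation*}
h(y):=c_0\, M_x^{1/s}\, F^{-\alpha/s}(y),
\end{equation*}
with a universal constant $c_0$, which depends only on $F|_{(x,b)}$; a short calculation copying the one in \cite{GKP} then yields $B_{12}(y;\alpha,\beta,s;h)\lesssim M_x$ pointwise on $(x,b)$, while the converse inequality is immediate from $h(y)+G(y)\ge G(y)$. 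A parallel choice $h(y):=c_0 M_x^{1/s} F^{-\alpha/s}(y)\cdot\chi_{(x,b)}$ handles (vii).

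\textbf{Main obstacle.} The only real subtlety is making sure every integration by parts places its finite boundary term at $y$ (inside $(x,b)$) rather than at $a$ (outside the truncation), and that the remote boundary at $b$ vanishes. For the upper-tail items (i)--(vii) this is automatic because every integral in the argument is supported on $(y,b)$ or on $(x,y)$, so no manipulation ever reaches outside the truncated window; this structural point is exactly what allows the \cite{GKP} estimates to localise. The mirror items (viii)--(xiv) are then obtained by the reflected argument with the roles of $F,G$ and the endpoints $a,b$ interchanged, the lower-tail integrations playing the symmetric role and obeying the same self-contained structure.
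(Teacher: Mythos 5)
Your overall strategy --- reopening the proofs from \cite{GKP} and checking that every estimate localises to $(x,b)$ --- is a legitimate route in principle, but it is much heavier than what is needed, and as written it has genuine gaps. The paper avoids all of this with one observation: apply Theorem A \emph{as a black box} to the truncated function $f_x:=\chi_{(x,b)}f$ (resp.\ $g_x:=\chi_{(a,x)}g$ for items (viii)--(xiv)). Since $\int_y^b f_x(t)\,dt=F(\max(x,y))$, the global supremum $\sup_{a<y<b}\bigl(\int_y^b f_x\bigr)^{\alpha}G^{\beta}(y)$ collapses to $\sup_{x<y<b}F^{\alpha}(y)G^{\beta}(y)$, and each $B_i$ formed with $f_x$ coincides on $(x,b)$ with the original $B_i$ (with integrals starting at $x$ where appropriate). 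No integration by parts, no case analysis, and no re-derivation of \cite{GKP} is required. Your plan forces you to actually execute fourteen localised versions of the \cite{GKP} arguments, and you only carry out one of them.

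In the one case you do detail, the subcase $\beta<s$ of the lower bound is justified incorrectly. Rearranging your identity gives
\begin{equation*}
F(y)G^{\frac{\beta-s}{\alpha}}(y)=\int_y^b f(t)G^{\frac{\beta-s}{\alpha}}(t)\,dt+\tfrac{s-\beta}{\alpha}\int_y^b F(t)G^{\frac{\beta-s-\alpha}{\alpha}}(t)g(t)\,dt,
\end{equation*}
and the nonnegativity of the last integral yields $F(y)G^{(\beta-s)/\alpha}(y)\ge\int_y^b fG^{(\beta-s)/\alpha}$, i.e.\ $B_2(y)\le B_1(y)$ --- the direction you already have. To prove $B_1\lesssim\sup B_2$ when $\beta<s$ you need a \emph{second} integration by parts, writing $F(y)=\int_y^b\bigl(-W'(t)\bigr)G^{(s-\beta)/\alpha}(t)\,dt$ with $W(t)=\int_t^b fG^{(\beta-s)/\alpha}$ and then inserting $W(t)\le\bigl(\sup_{x<z<b}B_2(z)\bigr)^{1/\alpha}G^{-s/\alpha}(t)$; this is absent from your argument. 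The infimum items are also problematic: your explicit choice $h(y)=c_0M_x^{1/s}F^{-\alpha/s}(y)$ does not produce a bound $B_{12}(y;h)\lesssim M_x$ with an $x$-independent constant (tracking the exponents leaves an uncancelled power of $G(y)$), whereas the natural near-optimal choice is simply $h=G$, which reduces $B_{12}$ to $2^sB_2$ and lets you quote item (i); and the claimed ``immediate'' converse $\inf_h\sup_yB_{12}(y;h)\gtrsim\sup_yB_1(y)$ is not immediate, since $h$ also appears inside the integral with a negative exponent. I would recommend abandoning the localise-the-proof programme in favour of the truncation trick above.
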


\begin{proof}
$(i)$ Let us consider the function
\begin{align*}
f_x(y):= \chi_{(x,b)}(y)f(y).
\end{align*}
By using the equivalence $B_1(\alpha, \beta) \approx B_2(\alpha, \beta, s)$ from Theorem A, we get
\begin{align*}
\sup_{x<y<b} B_1(y;\alpha, \beta)&=\sup_{x<y<b}F^{\alpha}(y)G^{\beta}(y) \\
&= \sup_{a<y<b} \left(\int_{y}^{b}f_x(t)\, dt\right)^{\alpha}\left(\int_{a}^{y}g(t)\, dt\right)^{\beta} \\
&\approx \sup_{a<y<b} \left({\int_{y}^{b}f_x(t)G^{\frac{\beta-s}{\alpha}}(t)\, dt}\right)^{\alpha}G^{s}(y) \\
&=\sup_{x<y<b} \left({\int_{y}^{b}f(t)G^{\frac{\beta-s}{\alpha}}(t)\, dt}\right)^{\alpha}G^{s}(y) \\
&=\sup_{x<y<b} B_2(y;\alpha, \beta, s).
\end{align*}

$(ii)$ Let us consider the function
\begin{align*}
f_x(y):= \chi_{(x,b)}(y)f(y).
\end{align*}
By using the equivalence $B_1(\alpha, \beta) \approx B_4(\alpha, \beta, s)$ from Theorem A, we get
\begin{align*}
\sup_{x<y<b} B_1(y;\alpha, \beta)&=\sup_{x<y<b}F^{\alpha}(y)G^{\beta}(y) \\
&= \sup_{a<y<b} \left(\int_{y}^{b}f_x(t)\, dt\right)^{\alpha}\left(\int_{a}^{y}g(t)\, dt\right)^{\beta} \\
&\approx \sup_{a<y<b} \left(\int_{a}^{y}f_x(t)G^{\frac{\beta+s}{\alpha}}(t)\, dt\right)^{\alpha}G^{-s}(y) \\
&=\sup_{x<y<b}  \left(\int_{x}^{y}f(t)G^{\frac{\beta+s}{\alpha}}(t)\, dt\right)^{\alpha}G^{-s}(y). 
\end{align*}

$(iii)$ Let us consider the function
\begin{align*}
f_x(y):= \chi_{(x,b)}(y)f(y).
\end{align*}
By using the equivalence $B_1(\alpha, \beta) \approx B_6(\alpha, \beta, s)$ from Theorem A, we get
\begin{align*}
\sup_{x<y<b} B_1(y;\alpha, \beta)&=\sup_{x<y<b}F^{\alpha}(y)G^{\beta}(y) \\
&= \sup_{a<y<b} \left(\int_{y}^{b}f_x(t)\, dt\right)^{\alpha}\left(\int_{a}^{y}g(t)\, dt\right)^{\beta} \\
&\approx \sup_{a<y<b} \left({\int_{y}^{b}f_x(t)G^{\frac{\beta}{\alpha+s}}(t)\, dt}\right)^{\alpha+s}F^{-s}(y) \\
&=\sup_{x<y<b} \left({\int_{y}^{b}f(t)G^{\frac{\beta}{\alpha+s}}(t)\, dt}\right)^{\alpha+s} F^{-s}(y) \\
&=\sup_{x<y<b} B_6(y;\alpha, \beta, s).
\end{align*}

$(iv)$ Let us consider the function
\begin{align*}
f_x(y):= \chi_{(x,b)}(y)f(y).
\end{align*}
By using the equivalence $B_1(\alpha, \beta) \approx B_8(\alpha, \beta, s)$ from Theorem A, we get
\begin{align*}
\sup_{x<y<b} B_1(y;\alpha, \beta)&=\sup_{x<y<b}F^{\alpha}(y)G^{\beta}(y) \\
&= \sup_{a<y<b} \left(\int_{y}^{b}f_x(t)\, dt\right)^{\alpha}\left(\int_{a}^{y}g(t)\, dt\right)^{\beta} \\
&\approx \sup_{a<y<b} \left(\int_{a}^{y}f_x(t)G^{\frac{\beta}{\alpha-s}}(t)\, dt\right)^{\alpha-s}F^{s}(y) \\
&=\sup_{x<y<b}  \left(\int_{x}^{y}f(t)G^{\frac{\beta}{\alpha-s}}(t)\, dt\right)^{\alpha-s}F^{s}(y). 
\end{align*}

$(v)$ Let us consider the function
\begin{align*}
f_x(y):= \chi_{(x,b)}(y)f(y).
\end{align*}
By using the equivalence $B_1(\alpha, \beta) \approx B_9(\alpha, \beta, s)$ from Theorem A, we get
\begin{align*}
\sup_{x<y<b} B_1(y;\alpha, \beta)&=\sup_{x<y<b}F^{\alpha}(y)G^{\beta}(y) \\
&= \sup_{a<y<b} \left(\int_{y}^{b}f_x(t)\, dt\right)^{\alpha}\left(\int_{a}^{y}g(t)\, dt\right)^{\beta} \\
&\approx \sup_{a<y<b} \left({\int_{y}^{b}f_x(t)G^{\frac{\beta}{\alpha-s}}(t)\, dt}\right)^{\alpha-s}F^{s}(y) \\
&=\sup_{x<y<b} \left({\int_{y}^{b}f(t)G^{\frac{\beta}{\alpha-s}}(t)\, dt}\right)^{\alpha-s} F^{s}(y) \\
&=\sup_{x<y<b} B_9(y;\alpha, \beta, s).
\end{align*}

$(vi)$ Let us consider the function
\begin{align*}
f_x(y):= \chi_{(x,b)}(y)f(y).
\end{align*}
By using the equivalence $B_1(\alpha, \beta) \approx B_{12}(\alpha, \beta, s)$ from Theorem A, we get
\begin{align*} 
 \sup_{x<y<b} B_1(y;\alpha, \beta)&=\sup_{x<y<b} F^\alpha(y)G^\beta(y) \\
 &=\sup_{a<y<b} \left(\int_{y}^{b}f_x(t)\, dt\right)^{\alpha}\left(\int_{a}^{y}g(t)\, dt\right)^{\beta} \\
&\approx \inf_{h \geq 0}\sup_{a<y<b} \Big(h(y)+G(y)\Big)^{s} \left(\int_y^b f_x(z)h(z)^{\frac{\beta-s}{\alpha}}dz\right)^{\alpha} \\
 &= \inf_{h \geq 0}\sup_{x<y<b} \Big(h(y)+G(y)\Big)^{s}\left(\int_y^b f(z) h(z)^{\frac{\beta-s}{\alpha}}dz\right)^{\alpha} \\
 &=\inf_{h \geq 0}\sup_{x<y<b} B_{12}(y;\alpha, \beta, s; h).
\end{align*}

$(vii)$ Let us consider the function
\begin{align*}
f_x(y):= \chi_{(x,b)}(y)f(y).
\end{align*}
By using the equivalence $B_1(\alpha, \beta) \approx B_{14}(\alpha, \beta, s)$ from Theorem A, we get
\begin{align*}
\sup_{x<y<b} B_1(y;\alpha, \beta)&=\sup_{x<y<b} F^\alpha(y)G^\beta(y) \\
 &=\sup_{a<y<b} \left(\int_{y}^{b}f_x(t)\, dt\right)^{\alpha}\left(\int_{a}^{y}g(t)\, dt\right)^{\beta} \\
 & \approx \inf_{h \geq 0} \sup_{a<y<b}h(y) ^{-s} \left(\int_a^y f_x(z)\Big(h(z)+G(z)\Big)^{\frac{\beta+s}{\alpha}}dz\right)^{\alpha} \\
 &= \inf_{h \geq 0} \sup_{x<y<b}h(y) ^{-s} \left(\int_x^y f(z)\Big(h(z)+G(z)\Big)^{\frac{\beta+s}{\alpha}}dz\right)^{\alpha}.
\end{align*}

$(viii)$ Let us consider the function
\begin{align*}
g_x(y):= \chi_{(a,x)}(y)g(y).
\end{align*}
By using the equivalence $B_1(\alpha, \beta) \approx B_3(\alpha, \beta, s)$ from Theorem A, we get
\begin{align*}
\sup_{a<y<x} B_1(y;\alpha, \beta)&=\sup_{a<y<x}F^{\alpha}(y)G^{\beta}(y) \\
&= \sup_{a<y<b} \left(\int_{y}^{b}f(t)\, dt\right)^{\alpha}\left(\int_{a}^{y}g_x(t)\, dt\right)^{\beta} \\
&\approx \sup_{a<y<b} \left({\int_{a}^{y}g_x(t)F^{\frac{\alpha-s}{\beta}}(t)\, dt}\right)^{\beta} F^{s}(y) \\
&=\sup_{a<y<x} \left({\int_{a}^{y}g(t)F^{\frac{\alpha-s}{\beta}}(t)\, dt}\right)^{\beta} F^{s}(y) \\
&=\sup_{a<y<x} B_3(y;\alpha, \beta, s).
\end{align*}

$(ix)$ Let us consider the function
\begin{align*}
g_x(y):= \chi_{(a,x)}(y)g(y).
\end{align*}
By using the equivalence $B_1(\alpha, \beta) \approx B_5(\alpha, \beta, s)$ from Theorem A, we get
\begin{align*}
\sup_{a<y<x} B_1(y;\alpha, \beta)&=\sup_{a<y<x}F^{\alpha}(y)G^{\beta}(y) \\
&= \sup_{a<y<b} \left(\int_{y}^{b}f(t)\, dt\right)^{\alpha}\left(\int_{a}^{y}g_x(t)\, dt\right)^{\beta} \\
&\approx \sup_{a<y<b} \left({\int_{y}^{b}g_x(t)F^{\frac{\alpha+s}{\beta}}(t) \, dt}\right)^{\beta}F^{-s}(y) \\
&=\sup_{a<y<x} \left({\int_{y}^{x}g(t)F^{\frac{\alpha+s}{\beta}}(t)\, dt}\right)^{\beta} F^{-s}(y).
\end{align*}

$(x)$ Let us consider the function
\begin{align*}
g_x(y):= \chi_{(a,x)}(y)g(y).
\end{align*}
By using the equivalence $B_1(\alpha, \beta) \approx B_7(\alpha, \beta, s)$ from Theorem A, we get
\begin{align*}
\sup_{a<y<x} B_1(y;\alpha, \beta)&=\sup_{a<y<x}F^{\alpha}(y)G^{\beta}(y) \\
&= \sup_{a<y<b} \left(\int_{y}^{b}f(t)\, dt\right)^{\alpha}\left(\int_{a}^{y}g_x(t)\, dt\right)^{\beta} \\
&\approx \sup_{a<y<b} \left({\int_{a}^{y}g_x(t)F^{\frac{\alpha}{\beta+s}}(t)\, dt}\right)^{\beta+s}G^{-s}(y) \\
&=\sup_{a<y<x} \left({\int_{a}^{y}g(t)F^{\frac{\alpha}{\beta+s}}(t)\, dt}\right)^{\beta+s} G^{-s}(y) \\
&=\sup_{a<y<x} B_7(y;\alpha, \beta, s).
\end{align*}

$(xi)$ Let us consider the function
\begin{align*}
g_x(y):= \chi_{(a,x)}(y)g(y).
\end{align*}
By using the equivalence $B_1(\alpha, \beta) \approx B_{10}(\alpha, \beta, s)$ from Theorem A, we get
\begin{align*}
\sup_{a<y<x} B_1(y;\alpha, \beta)&=\sup_{a<y<x}F^{\alpha}(y)G^{\beta}(y) \\
&= \sup_{a<y<b} \left(\int_{y}^{b}f(t)\, dt\right)^{\alpha}\left(\int_{a}^{y}g_x(t)\, dt\right)^{\beta} \\
&\approx \sup_{a<y<b} \left(\int_{y}^{b}g_x(t)F^{\frac{\alpha}{\beta-s}}(t)\, dt\right)^{\beta-s}G^{s}(y) \\
&=\sup_{a<y<x}  \left(\int_{y}^{x}g(t)F^{\frac{\alpha}{\beta-s}}(t)\, dt\right)^{\beta-s}G^{s}(y). 
\end{align*}

$(xii)$ Let us consider the function
\begin{align*}
g_x(y):= \chi_{(a,x)}(y)g(y).
\end{align*}
By using the equivalence $B_1(\alpha, \beta) \approx B_{11}(\alpha, \beta, s)$ from Theorem A, we get
\begin{align*}
\sup_{a<y<x} B_1(y;\alpha, \beta)&=\sup_{a<y<x}F^{\alpha}(y)G^{\beta}(y) \\
&= \sup_{a<y<b} \left(\int_{y}^{b}f(t)\, dt\right)^{\alpha}\left(\int_{a}^{y}g_x(t)\, dt\right)^{\beta} \\
&\approx \sup_{a<y<b} \left({\int_{a}^{y}g_x(t)F^{\frac{\alpha}{\beta-s}}(t)\, dt}\right)^{\beta-s}G^{s}(y) \\
&=\sup_{a<y<x} \left({\int_{a}^{y}g(t)F^{\frac{\alpha}{\beta-s}}(t)\, dt}\right)^{\beta-s} G^{s}(y) \\
&=\sup_{a<y<x} B_{11}(y;\alpha, \beta, s).
\end{align*}

$(xiii)$ Let us consider the function
\begin{align*}
g_x(y):= \chi_{(a,x)}(y)g(y).
\end{align*}
By using the equivalence $B_1(\alpha, \beta) \approx B_{13}(\alpha, \beta, s)$ from Theorem A, we get
\begin{align*} 
 \sup_{a<y<x} B_1(y;\alpha, \beta)&=\sup_{x<y<b} F^\alpha(y)G^\beta(y) \\
 &=\sup_{a<y<b} \left(\int_{y}^{b}f(t)\, dt\right)^{\alpha}\left(\int_{a}^{y}g_x(t)\, dt\right)^{\beta} \\
&\approx \inf_{h \geq 0}\sup_{a<y<b} \Big(h(y)+F(y)\Big)^{s} \left(\int_a^y g_x(z)h(z)^{\frac{\alpha-s}{\beta}}dz\right)^{\beta} \\
 &= \inf_{h \geq 0}\sup_{a<y<x} \Big(h(y)+F(y)\Big)^{s}\left(\int_a^y g(z) h(z)^{\frac{\alpha-s}{\beta}}dz\right)^{\beta} \\
 &=\inf_{h \geq 0}\sup_{a<y<x} B_{13}(y;\alpha, \beta, s; h).
\end{align*}

$(xiv)$ Let us consider the function
\begin{align*}
g_x(y):= \chi_{(a,x)}(y)g(y).
\end{align*}
By using the equivalence $B_1(\alpha, \beta) \approx B_{15}(\alpha, \beta, s)$ from Theorem A, we get
\begin{align*}
\sup_{a<y<x} B_1(y;\alpha, \beta)&=\sup_{a<y<x} F^\alpha(y)G^\beta(y) \\
 &=\sup_{a<y<b} \left(\int_{y}^{b}f(t)\, dt\right)^{\alpha}\left(\int_{a}^{y}g_x(t)\, dt\right)^{\beta} \\
 & \approx \inf_{h \geq 0} \sup_{a<y<b}h(y) ^{-s} \left(\int_y^b g_x(z)\Big(h(z)+F(z)\Big)^{\frac{\alpha+s}{\beta}}dz\right)^{\beta} \\
 &= \inf_{h \geq 0} \sup_{a<y<x}h(y) ^{-s} \left(\int_y^x g(z)\Big(h(z)+F(z)\Big)^{\frac{\alpha+s}{\beta}}dz\right)^{\beta}.
\end{align*}
\end{proof}

\begin{remark}
	Theorem \ref{IThA} suggests that the equivalence is independent of $x$.
\end{remark}

\begin{remark}
Theorem \ref{IThA} demonstrates that, in Theorem A, the supremum over the interval $(a,b)$ can be considered in certain truncated intervals. This consideration will be useful in the results of next section. Moreover, Theorem A can be obtained by Theorem \ref{IThA} by taking $x=a$ or $x=b$ appropriately.
\end{remark}

\section{Equivalence Theorem for Bilinear Hardy Inequality}

We prove the following:

\begin{theorem}\label{t2.1}
Let $-\infty \leq a < b \leq \infty$, $\alpha, \beta, \gamma, s, s_{1}, s_{2}$ be positive numbers and $f, g, h, h_1, h_2$ $\in \mathfrak{M}^{+}$. Let $F,G$ be as in \eqref{e1.4} and denote
\begin{equation} \label{e2.1}
  H(x):= \int_{a}^{x}h(t)\, dt.
\end{equation}
Assume that $F(x)$, $G(x)$ and $H(x)$ are finite for all $x \in (a,b)$. Consider
\allowdisplaybreaks[4]
\begin{align*}
&\tilde{B}_{1}(x; \alpha, \beta, \gamma):= F^{\alpha}(x)G^{\beta}(x)H^{\gamma}(x);\\
&\tilde{B}_{2}(x; \alpha, \beta, \gamma, s_{1},s_{2}):= {\displaystyle \left({\int_{x}^{b}f(t)G^{\frac{\beta-s_{1}}{\alpha}}(t)H^{\frac{\gamma-s_{2}}{\alpha}}(t)\, dt}\right)^{\alpha}G^{s_{1}}(x)H^{s_{2}}(x)};\\
&\tilde{B}_{3}(x; \alpha, \beta, \gamma, s_{1},s_{2}):= {\displaystyle \left({\int_{a}^{x}f(t)G^{\frac{\beta+s_{1}}{\alpha}}(t)H^{\frac{\gamma+s_{2}}{\alpha}}(t)\, dt}\right)^{\alpha}G^{-s_{1}}(x)H^{-s_{2}}(x)} ; \\
&\tilde{B}_4(x; \alpha, \beta, \gamma, s_1,s_2):= \left(\int_{a}^{x}f(t)G^{\frac{\beta}{\alpha+s_1}}(t)H^{\frac{\gamma-s_2}{\alpha+s_1}}(t)\, dt\right)^{\alpha+s_1}F^{-s_1}(x)H^{s_2}(x); \\
&\tilde{B}_5(x; \alpha, \beta, \gamma, s_1,s_2):= \left(\int_{a}^{x}f(t)G^{\frac{\beta-s_2}{\alpha+s_1}}(t)H^{\frac{\gamma}{\alpha+s_1}}(t)\, dt\right)^{\alpha+s_1}F^{-s_1}(x)G^{s_2}(x); \\
&\tilde{B}_6(x; \alpha, \beta, \gamma, s_1,s_2):= \left(\int_{a}^{x}f(t)G^{\frac{\beta}{\alpha-s_1}}(t)H^{\frac{\gamma+s_2}{\alpha-s_1}}(t)\, dt\right)^{\alpha-s_1}F^{s_1}(x)H^{-s_2}(x), \quad \alpha > s_1; \\
&\tilde{B}_7(x; \alpha, \beta, \gamma, s_1,s_2):= \left(\int_{a}^{x}f(t)G^{\frac{\beta+s_2}{\alpha-s_1}}(t)H^{\frac{\gamma}{\alpha-s_1}}(t)\, dt\right)^{\alpha-s_1}F^{s_1}(x)G^{-s_2}(x),  \quad \alpha > s_1; \\
&\tilde{B}_{8}(x; \alpha, \beta, \gamma, s_1,s_2):= \left(\int_{x}^{b}f(t)G^{\frac{\beta}{\alpha-s_1}}(t)\, dt\right)^{\alpha-s_1} H^{s_2}(x)\left(\int_{x}^{b}f(t)H^{\frac{\gamma-s_2}{s_1}}(t)\, dt\right)^{s_1}, \quad \alpha < s_1; \\
&\tilde{B}_{9}(x; \alpha, \beta, \gamma, s_1,s_2):= \left(\int_{x}^{b}f(t)H^{\frac{\gamma}{\alpha-s_1}}(t)\, dt\right)^{\alpha-s_1} G^{s_2}(x)\left(\int_{x}^{b}f(t)G^{\frac{\beta-s_2}{s_1}}(t)\, dt\right)^{s_1}, \quad \alpha < s_1; \\
&\tilde{B}_{10}(x; \alpha, \beta, \gamma, s_1, s_2):= \left(\int_{a}^{x}g(t)F^{\frac{\alpha/2}{\beta+s_1}}(t)\, dt\right)^{\beta+s_1}\left(\int_{a}^{x}h(t)F^{\frac{\alpha/2}{\gamma+s_2}}(t)\, dt\right)^{\gamma+s_2}G^{-s_1}(x)H^{-s_2}(x); \\
&\tilde{B}_{11}(x; \alpha, \beta, \gamma, s):= \left(\int_{x}^{b}f(t)G^{\frac{\beta(1-s)}{\alpha}}(t)H^{\frac{\gamma(1-s)}{\alpha}}(t)\, dt\right)^{\alpha}G^{\beta s}(x)H^{\gamma s}(x); \\
&\tilde{B}_{12}(x; \alpha, \beta, \gamma, s):= \left(\int_{a}^{x}f(t)G^{\frac{\beta(1+s)}{\alpha}}(t)H^{\frac{\gamma(1+s)}{\alpha}}(t)\, dt\right)^{\alpha}G^{-\beta s}(x)H^{-\gamma s}; \\
&\tilde{B}_{13}(x; \alpha, \beta, \gamma, s):= \left(\int_{x}^{b}f(t)G^{\frac{\beta}{\alpha+s}}(t)H^{\frac{\gamma}{\alpha+s}}(t)\, dt\right)^{\alpha+s}F^{-s}(x); \\
&\tilde{B}_{14}(x; \alpha, \beta, \gamma, s):= \left(\int_{a}^{x}f(t)G^{\frac{\beta}{\alpha-s}}(t)H^{\frac{\gamma}{\alpha-s}}(t)\, dt\right)^{\alpha-s}F^{s}(x), \quad \alpha > s; \\
&\tilde{B}_{15}(x; \alpha, \beta, \gamma, s):= \left(\int_{x}^{b}f(t)G^{\frac{\beta}{\alpha-s}}(t)H^{\frac{\gamma}{\alpha-s}}(t)\, dt\right)^{\alpha-s}F^{s}(x), \quad \alpha < s; \\
&\tilde{B}_{16}(x; \alpha, \beta, \gamma, s; h_1):= \left(\int_{x}^{b}f(t)h_1^{\frac{\gamma-s}{\alpha}}(t)\, dt\right)^{\alpha} G^{\beta}(x) \Big(h_1(x)+H(x)\Big)^s, \quad \gamma<s;\\
&\tilde{B}_{17}(x; \alpha, \beta, \gamma, s; h_1):= \left(\int_{x}^{b}f(t)h_1^{\frac{\beta-s}{\alpha}}(t)\, dt\right)^{\alpha}H^{\gamma}(x)\Big(h_1(x)+G(x)\Big)^s, \quad \beta<s; \\
& \tilde{B}_{18}(x; \alpha, \beta, \gamma, s_1, s_2; h_1, h_2):= \left(\int_{x}^{b}f(t)h_1^{\frac{\beta-s_1}{\alpha}}(t)h_2^{\frac{\gamma-s_2}{\alpha}}(t)\, dt\right)^{\alpha} \left\{ \sup_{a< y <x}\Big(h_1(y)+G(y)\Big)^{s_1}\right\} \times \\
& \hskip 15em \times  \Big(h_2(x)+H(x)\Big)^{s_2}, \quad \beta < s_1, \, \gamma < s_2 ; \\
& \tilde{B}_{19}(x; \alpha, \beta, \gamma, s_1, s_2; h_1, h_2):= \left(\int_{x}^{b}f(t)h_1^{\frac{\beta-s_1}{\alpha}}(t)h_2^{\frac{\gamma-s_2}{\alpha}}(t)\, dt\right)^{\alpha}  \Big(h_1(x)+G(x)\Big)^{s_1} \times \\
& \hskip 15em \times \left\{ \sup_{a< y <x} \Big(h_2(y)+H(y)\Big)^{s_2}\right\} , \quad \beta < s_1, \, \gamma < s_2; \\
& \tilde{B}_{20}(x; \alpha, \beta, \gamma, s_1, s_2; h_1, h_2):= \left(\int_{x}^{b}f(t)h_1^{\frac{\beta-s_1}{\alpha}}(t)h_2^{\frac{\gamma-s_2}{\alpha}}(t)\, dt\right)^{\alpha} \left\{ \sup_{a< y <x} \Big(h_1(y)+G(y)\Big)^{s_1}\right\} \times \\
& \hskip 15em \times \left\{ \sup_{a< y <x}\Big(h_2(y)+H(y)\Big)^{s_2}\right\} , \quad \beta < s_1, \, \gamma < s_2 ; \\
& \tilde{B}_{21}(x; \alpha, \beta, \gamma, s_1, s_2; h_1, h_2):=\left(\int_{a}^{x}f(t)\Big(h_1(t)+G(t)\Big)^{\frac{\beta+s_1}{\alpha}}\Big(h_2(t)+H(t)\Big)^{\frac{\gamma+s_2}{\alpha}}\, dt\right)^{\alpha}\, \times \\
& \hskip 25em \times \left\{ \sup_{x<y<b}h_1^{-s_1}(y)\right\} h_2^{-s_2}(x); \\
&  \tilde{B}_{22}(x; \alpha, \beta, \gamma, s_1, s_2; h_1, h_2):= \left(\int_{a}^{x}f(t)\Big(h_1(t)+G(t)\Big)^{\frac{\beta+s_1}{\alpha}}\Big(h_2(t)+H(t)\Big)^{\frac{\gamma+s_2}{\alpha}}\, dt\right)^{\alpha}\, \times \\
& \hskip 25em \times h_1^{-s_1}(x) \left\{ \sup_{x<y<b}h_2^{-s_2}(y)\right\}; \\
& \tilde{B}_{23}(x; \alpha, \beta, \gamma, s_1, s_2; h_1, h_2):= \left(\int_{a}^{x}f(t)\Big(h_1(t)+G(t)\Big)^{\frac{\beta+s_1}{\alpha}}\Big(h_2(t)+H(t)\Big)^{\frac{\gamma+s_2}{\alpha}}\, dt\right)^{\alpha}\, \times \\
& \hskip 22em \times \left\{\sup_{x<y<b}h_1^{-s_1}(y)\right\} \left\{ \sup_{x<y<b}h_2^{-s_2}(y)\right\}.
\end{align*}
The numbers
\begin{align*}
& \tilde{B}_1(\alpha, \beta, \gamma):= \sup_{a < x < b}\tilde{B}_1(x; \alpha, \beta, \gamma),\\
& \tilde{B}_i(\alpha, \beta, \gamma, s_1, s_2):=  \sup_{a < x < b} \tilde{B}_i(x; \alpha, \beta, \gamma, s_1,s_2); \quad (i=2, 3, 4, 5, 6, 7, 8, 9, 10), \\
& \tilde{B}_i(\alpha, \beta, \gamma, s):= \sup_{a < x < b} \tilde{B}_i(x; \alpha, \beta, \gamma, s); \quad (i=11, 12, 13, 14, 15), \\
& \tilde{B}_i(\alpha, \beta, \gamma, s):= \inf_{h_1 \geq 0} \sup_{a < x < b} \tilde{B}_i(x; \alpha, \beta, \gamma, s; h_1); \quad (i=16, 17)
\end{align*}
and
\begin{align*}
\tilde{B}_i(\alpha, \beta, \gamma, s_1, s_2):= \inf_{h_1, h_2 \geq 0} \sup_{a < x < b} \tilde{B}_i(x; \alpha, \beta, \gamma, s_1, s_2; h_1, h_2); \quad (i=18, 19, 20, 21, 22, 23)
\end{align*}
are mutually equivalent in the sense that if one number is finite then the others are so.
\end{theorem}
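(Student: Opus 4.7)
The plan is to reduce each equivalence $\tilde{B}_1 \approx \tilde{B}_i$ to a sequence of two applications of Theorem A, together with the truncated-interval refinement proved in Theorem \ref{IThA}. The underlying idea is that $\tilde{B}_1 = F^\alpha G^\beta H^\gamma$ is a product of three monotone one-variable factors, and each of Theorem A's scales of equivalent expressions lets one trade a monotone factor for an integral factor (or vice versa). Performing two such trades in sequence---the first involving the pair $(F,G)$ and the second involving the pair $(F,H)$---converts $\tilde{B}_1$ into each of $\tilde{B}_2,\ldots,\tilde{B}_9$.

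To illustrate on the equivalence $\tilde{B}_1 \approx \tilde{B}_2$: I would first invoke the $B_1 \approx B_2$ scale of Theorem A with parameters $(\alpha,\beta,s_1)$ applied to the pair $(F,G)$, trading part of $G^\beta$ for an integral, which gives $F^\alpha G^\beta \approx \left(\int_\cdot^b f(t)\,G^{(\beta-s_1)/\alpha}(t)\,dt\right)^\alpha G^{s_1}$. Setting $\tilde f(t):=f(t)\,G^{(\beta-s_1)/\alpha}(t)$ and $\tilde F(x):=\int_x^b \tilde f(t)\,dt$, I would then apply Theorem A a second time, with parameters $(\alpha,\gamma,s_2)$ applied to the new pair $(\tilde F, H)$, to get $\tilde F^\alpha H^\gamma \approx \left(\int_\cdot^b \tilde f(t)\,H^{(\gamma-s_2)/\alpha}(t)\,dt\right)^\alpha H^{s_2}$; chaining the two equivalences yields exactly $\tilde{B}_2$. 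The remaining Muckenhoupt-type cases $\tilde{B}_3$--$\tilde{B}_9$ follow the same two-step recipe, the only change being which of $B_2,\ldots,B_{11}$ from Theorem A is invoked at each step and whether the outer integrals are on $(a,x)$ or on $(x,b)$; the ``split'' forms $\tilde{B}_8, \tilde{B}_9$ and the symmetric conditions $\tilde{B}_{10}$--$\tilde{B}_{15}$ are treated analogously.

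For the infimum-type conditions $\tilde{B}_{16}$--$\tilde{B}_{23}$, the same strategy applies but with the $B_{12}$--$B_{15}$ equivalences of Theorem A used at the relevant step. A single such application yields $\tilde{B}_{16}, \tilde{B}_{17}$ (involving one auxiliary function $h_1$), while two successive applications yield $\tilde{B}_{18}$--$\tilde{B}_{23}$ (involving two independent auxiliary functions $h_1, h_2$). The ``outer suprema'' of the form $\sup_{a<y<x}(h_j(y)+\,\cdot\,(y))^{s_j}$ that decorate $\tilde{B}_{18}$--$\tilde{B}_{23}$ arise naturally from the truncation mechanism in Theorem \ref{IThA} applied at one step of the chained argument, which replaces a sup over all $(a,b)$ by a sup over a truncated sub-interval.

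The principal obstacle is the supremum operation itself: Theorem A's equivalences are statements about $\sup_{a<x<b}$, not pointwise inequalities, so when one multiplies an equivalence by an unrelated monotone factor (such as the $H^\gamma(x)$ left intact at the first step) the equivalence does not transfer automatically. This is exactly where the truncated-interval refinement in Theorem \ref{IThA} is needed, since it guarantees that each equivalence holds uniformly over $\sup_{y>x}$ for every fixed $x$; this is what allows a subsequent equivalence to be composed with any frozen value of the remaining monotone factor. A secondary but unavoidable technical point is the bookkeeping of the sign conditions $\alpha\gtrless s$, $\beta\gtrless s$, $\gamma\gtrless s$ that dictate whether the ``positive-shift'' or ``negative-shift'' version of the Muckenhoupt-type expression is the right one to use at each step and hence which case of Theorem A is invoked.
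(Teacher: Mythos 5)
Your proposal follows essentially the same route as the paper: each equivalence is obtained by two successive applications of Theorem A in its truncated form (Theorem \ref{IThA}), interchanging suprema and freezing the remaining monotone factor in between, which is exactly the paper's argument for $\tilde{B}_2$--$\tilde{B}_9$ and the infimum-type conditions. The only point you gloss over is that for $\tilde{B}_{18}$--$\tilde{B}_{23}$ the decorating suprema yield only one inequality directly, and the paper closes the loop with the explicit test choice $h_1=G$, $h_2=H$ combined with the already established equivalences $\tilde{B}_1\approx\tilde{B}_2$ and $\tilde{B}_1\approx\tilde{B}_3$; this is a small supplement to the same strategy.
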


\begin{proof}
The equivalence $\tilde{B}_1(\alpha, \beta, \gamma) \approx \tilde{B}_i(\alpha, \beta, \gamma, s_1,s_2) \approx \tilde{B}_j(\alpha, \beta, \gamma, s)$, $(i=2, 3, 10)$, $(j=13, 14, 15)$ has been proved in \cite{KPS}. Here, we give different proofs for the equivalence
\begin{equation*}
\tilde{B}_1(\alpha, \beta, \gamma) \approx \tilde{B}_k(\alpha, \beta, \gamma, s_1,s_2), \, (k=2, 3)
\end{equation*}
and prove the remaining ones.
\\~\\
$(i)$ $\tilde{B}_1(\alpha, \beta, \gamma) \approx \tilde{B}_2(\alpha, \beta, \gamma, s_1, s_2)$
\\~\\
By using the equivalence $B_{1}(\alpha, \beta) \approx B_{2}(\alpha, \beta, s)$ from Theorem A and in view of Theorem \ref{IThA} (i)
\begin{align*}
\sup_{x<y<b}F^{\alpha}(y)G^{\beta}(y) \approx \sup_{x<y<b} \left({\int_{y}^{b}f(t)G^{\frac{\beta-s_1}{\alpha}}(t)\, dt}\right)^{\alpha}G^{s_1}(y),
\end{align*}
we get
\allowdisplaybreaks[4]
\begin{align*}
\tilde{B}_1(\alpha, \beta, \gamma)=&\sup_{a<x<b}\tilde{B}_1(x; \alpha, \beta, \gamma)\\
=&\sup_{a<x<b}F^{\alpha}(x)G^{\beta}(x)H^{\gamma}(x) \\
=&\sup_{a<x<b}H^{\gamma}(x)\sup_{x<y<b}F^{\alpha}(y)G^{\beta}(y)\\
\approx &\sup_{a<x<b}H^{\gamma}(x)\sup_{x<y<b}G^{s_1}(y)\left(\int_{y}^{b}f(t)G^{\frac{\beta-s_1}{\alpha}}(t)\, dt\right)^{\alpha} \\
=&\sup_{a<x<b}G^{s_1}(x)\sup_{x<y<b}H^{\gamma}(y)\left(\int_{y}^{b}f(t)G^{\frac{\beta-s_1}{\alpha}}(t)\, dt\right)^{\alpha} \\
\approx &\sup_{a<x<b}G^{s_1}(y)\sup_{x<y<b}H^{s_2}(y)\left(\int_{y}^{b}f(t)G^{\frac{\beta-s_1}{\alpha}}(t)H^{\frac{\gamma-s_2}{\alpha}}(t)\, dt\right)^{\alpha} \\
=&\sup_{a<x<b}\left(\int_{x}^{b}f(t)G^{\frac{\beta-s_1}{\alpha}}(t)H^{\frac{\gamma-s_2}{\alpha}}(t)\, dt\right)^{\alpha}G^{s_1}(x)H^{s_2}(x) \\
=&\sup_{a<x<b}\tilde{B}_2(x; \alpha, \beta, \gamma, s_{1},s_{2}) \\
=&\tilde{B}_2(\alpha, \beta, \gamma, s_1, s_2).
\end{align*}
\\~\\
$(ii)$ $\tilde{B}_1(\alpha, \beta, \gamma) \approx \tilde{B}_3(\alpha, \beta, \gamma, s_1, s_2)$
\\~\\
By using the equivalence $B_{1}(\alpha, \beta) \approx B_{4}(\alpha, \beta, s)$ from Theorem A and in view of Theorem \ref{IThA} (ii)
\begin{align*}
\sup_{x<y<b} F^{\alpha}(y)G^{\beta}(y)  \approx \sup_{x<y<b}  \left(\int_{x}^{y}f(t)G^{\frac{\beta+s_1}{\alpha}}(t)\, dt\right)^{\alpha}G^{-s_1}(y),
\end{align*}
we get
\begin{align*}
\tilde{B}_1(\alpha, \beta, \gamma)=&\sup_{a<x<b}\tilde{B}_1(x; \alpha, \beta, \gamma)\\
=&\sup_{a<x<b}F^{\alpha}(x)G^{\beta}(x)H^{\gamma}(x) \\
=&\sup_{a<x<b}H^{\gamma}(x)\sup_{x<y<b}F^{\alpha}(y)G^{\beta}(y)\\
\approx &\sup_{a<x<b}H^{\gamma}(x)\sup_{x<y<b}G^{-s_1}(y)\left(\int_{x}^{y}f(t)G^{\frac{\beta+s_1}{\alpha}}(t)\, dt\right)^{\alpha} \\
=&\sup_{a<y<b}G^{-s_1}(y)\sup_{a<x<y}H^{\gamma}(x)\left(\int_{x}^{y}f(t)G^{\frac{\beta+s_1}{\alpha}}(t)\, dt\right)^{\alpha} \\
\approx &\sup_{a<y<b}G^{-s_1}(y)\sup_{a<x<y}H^{-s_2}(x)\left(\int_{a}^{x}f(t)G^{\frac{\beta+s_1}{\alpha}}(t)H^{\frac{\gamma+s_2}{\alpha}}(t)\, dt\right)^{\alpha} \\
=&\sup_{a<y<b}\left(\int_{a}^{y}f(t)G^{\frac{\beta+s_1}{\alpha}}(t)H^{\frac{\gamma+s_2}{\alpha}}(t)\, dt\right)^{\alpha}G^{-s_1}(y)H^{-s_2}(y) \\
=&\sup_{a<x<b}\left(\int_{a}^{x}f(t)G^{\frac{\beta+s_1}{\alpha}}(t)H^{\frac{\gamma+s_2}{\alpha}}(t)\, dt\right)^{\alpha}G^{-s_1}(x)H^{-s_2}(x) \\
=&\sup_{a<x<b}\tilde{B}_3(x; \alpha, \beta, \gamma, s_{1},s_{2}) \\
=&\tilde{B}_3(\alpha, \beta, \gamma, s_1, s_2).
\end{align*}
\\~\\
$(iii)$ $\tilde{B}_1(\alpha, \beta, \gamma) \approx \tilde{B}_4(\alpha, \beta, \gamma, s_1, s_2)$
\\~\\
By using the equivalences $B_{1}(\alpha, \beta) \approx B_{6}(\alpha, \beta, s)$, $B_{1}(\alpha, \beta) \approx B_{2}(\alpha, \beta, s)$ from Theorem A and in view of Theorem \ref{IThA} (iii)
\begin{align*}
\sup_{x<y<b}F^{\alpha}(y)G^{\beta}(y)\approx  \sup_{x<y<b} \left(\int_{y}^{b}f(t)G^{\frac{\beta}{\alpha+s_1}}(t)\, dt\right)^{\alpha+s_1}F^{-s_1}(y),
\end{align*}
we get
\begin{align*}
\tilde{B}_1(\alpha, \beta, \gamma)=&\sup_{a<x<b}\tilde{B}_1(x; \alpha, \beta, \gamma)\\
=&\sup_{a<x<b}F^{\alpha}(x)G^{\beta}(x)H^{\gamma}(x) \\
= &\sup_{a<x<b}H^{\gamma}(x)\sup_{x<y<b}F^{\alpha}(y)G^{\beta}(y) \\
\approx & \sup_{a<x<b}H^{\gamma}(x) \sup_{x<y<b} \left(\int_{y}^{b}f(t)G^{\frac{\beta}{\alpha+s_1}}(t)\, dt\right)^{\alpha+s_1}F^{-s_1}(y) \\
=&\sup_{a<x<b}F^{-s_1}(x) \sup_{x<y<b} \left(\int_{y}^{b}f(t)G^{\frac{\beta}{\alpha+s_1}}(t)\, dt\right)^{\alpha+s_1}H^{\gamma}(y) \\
\approx & \sup_{a<x<b} F^{-s_1}(x) \sup_{x<y<b} \left(\int_{y}^{b}f(t)G^{\frac{\beta}{\alpha+s_1}}(t)H^{\frac{\gamma-s_2}{\alpha+s_1}}(t)\, dt\right)^{\alpha+s_1}H^{s_2}(y) \\
=& \sup_{a<x<b} F^{-s_1}(x) H^{s_2}(x) \left(\int_{x}^{b}f(t)G^{\frac{\beta}{\alpha+s_1}}(t)H^{\frac{\gamma-s_2}{\alpha+s_1}}(t)\, dt\right)^{\alpha+s_1} \\
=&\sup_{a<x<b}\tilde{B}_4(x; \alpha, \beta, \gamma, s_{1},s_{2}) \\
=&\tilde{B}_4(\alpha, \beta, \gamma, s_1, s_2).
\end{align*}
\\~\\
$(iv)$ $\tilde{B}_1(\alpha, \beta, \gamma) \approx \tilde{B}_5(\alpha, \beta, \gamma, s_1, s_2)$
\\~\\
By using the equivalences $B_{1}(\alpha, \beta) \approx B_{6}(\alpha, \beta, s)$, $B_{1}(\alpha, \beta) \approx B_{2}(\alpha, \beta, s)$ from Theorem A and in view of Theorem \ref{IThA} (iii)
\begin{align*}
\sup_{x<y<b}F^{\alpha}(y)H^{\gamma}(y)\approx  \sup_{x<y<b} \left(\int_{y}^{b}f(t)H^{\frac{\gamma}{\alpha+s_1}}(t)\, dt\right)^{\alpha+s_1}F^{-s_1}(y),
\end{align*}
we get
\begin{align*}
\tilde{B}_1(\alpha, \beta, \gamma)=&\sup_{a<x<b}\tilde{B}_1(x; \alpha, \beta, \gamma)\\
=&\sup_{a<x<b}F^{\alpha}(x)G^{\beta}(x)H^{\gamma}(x) \\
= &\sup_{a<x<b}G^{\beta}(x)\sup_{x<y<b}F^{\alpha}(y)H^{\gamma}(y) \\
\approx & \sup_{a<x<b}G^{\beta}(x) \sup_{x<y<b} \left(\int_{y}^{b}f(t)H^{\frac{\gamma}{\alpha+s_1}}(t)\, dt\right)^{\alpha+s_1}F^{-s_1}(y) \\
=& \sup_{a<x<b}F^{-s_1}(x) \sup_{x<y<b} \left(\int_{y}^{b}f(t)H^{\frac{\gamma}{\alpha+s_1}}(t)\, dt\right)^{\alpha+s_1}G^{\beta}(y) \\
\approx & \sup_{a<x<b} F^{-s_1}(x) \sup_{x<y<b} \left(\int_{y}^{b}f(t)H^{\frac{\gamma}{\alpha+s_1}}(t)G^{\frac{\beta-s_2}{\alpha+s_1}}(t)\, dt\right)^{\alpha+s_1}G^{s_2}(y) \\
=& \sup_{a<x<b} F^{-s_1}(x) G^{s_2}(x) \left(\int_{x}^{b}f(t)H^{\frac{\gamma}{\alpha+s_1}}(t)G^{\frac{\beta-s_2}{\alpha+s_1}}(t)\, dt\right)^{\alpha+s_1} \\
=&\sup_{a<x<b}\tilde{B}_5(x; \alpha, \beta, \gamma, s_{1},s_{2}) \\
=&\tilde{B}_5(\alpha, \beta, \gamma, s_1, s_2).
\end{align*}
\\~\\
$(v)$ $\tilde{B}_1(\alpha, \beta, \gamma) \approx \tilde{B}_6(\alpha, \beta, \gamma, s_1, s_2)$
\\~\\
By using the equivalences $B_{1}(\alpha, \beta) \approx B_{8}(\alpha, \beta, s)$, $B_{1}(\alpha, \beta) \approx B_{4}(\alpha, \beta, s)$  from Theorem A and in view of Theorem \ref{IThA} (iv)
\begin{align*}
\sup_{x<y<b}G^{\beta}(y)F^{\alpha}(y)\approx \sup_{x<y<b}  \left(\int_{x}^{y}f(t)G^{\frac{\beta}{\alpha-s_1}}(t)\, dt\right)^{\alpha-s_1}F^{s_1}(y); \, \, \,  \alpha > s_1,
\end{align*}
we have   
\begin{align*}
\tilde{B}_1(\alpha, \beta, \gamma)=&\sup_{a<x<b}\tilde{B}_1(x; \alpha, \beta, \gamma)\\
=&\sup_{a<x<b}F^{\alpha}(x)G^{\beta}(x)H^{\gamma}(x) \\
=&\sup_{a<x<b}H^{\gamma}(x) \sup_{x<y<b}F^{\alpha}(y)G^{\beta}(y) \\
\approx & \sup_{a<x<b} H^{\gamma}(x) \sup_{x<y<b}\left(\int_{x}^{y}f(t)G^{\frac{\beta}{\alpha-s_1}}(t)\, dt\right)^{\alpha-s_1}F^{s_1}(y) \\
=& \sup_{a<x<b} F^{s_1}(x)\sup_{a<y<x}H^{\gamma}(y)\left(\int_{y}^{x}f(t)G^{\frac{\beta}{\alpha-s_1}}(t)\, dt\right)^{\alpha-s_1}\\
\approx& \sup_{a<x<b} F^{s_1}(x)\sup_{a<y<x}H^{-s_2}(y)\left(\int_{a}^{y}f(t)H^{\frac{\gamma+s_2}{\alpha-s_1}}(t) G^{\frac{\beta}{\alpha-s_1}}(t)\, dt\right)^{\alpha-s_1}\\
=& \sup_{a<x<b} F^{s_1}(x)H^{-s_2}(x)\left(\int_{a}^{x}f(t)H^{\frac{\gamma+s_2}{\alpha-s_1}}(t) G^{\frac{\beta}{\alpha-s_1}}(t)\, dt\right)^{\alpha-s_1}\\
=&\sup_{a<x<b}  \tilde{B}_6(x; \alpha, \beta, \gamma, s_1, s_2) \\
=&\tilde{B}_6(\alpha, \beta, \gamma, s_1, s_2).
\end{align*}
\\~\\
$(vi)$ $\tilde{B}_1(\alpha, \beta, \gamma) \approx \tilde{B}_7(\alpha, \beta, \gamma, s_1, s_2)$
\\~\\
By using the equivalences $B_{1}(\alpha, \beta) \approx B_{8}(\alpha, \beta, s)$, $B_{1}(\alpha, \beta) \approx B_{4}(\alpha, \beta, s)$ from Theorem A and in view of Theorem \ref{IThA} (iv)
\begin{align*}
 \sup_{x<y<b}H^{\gamma}(y)F^{\alpha}(y)\approx \sup_{x<y<b}  \left(\int_{x}^{y}f(t)H^{\frac{\gamma}{\alpha-s_1}}(t)\, dt\right)^{\alpha-s_1}F^{s_1}(y); \, \, \,  \alpha > s_1,
\end{align*}
we have   
\begin{align*}
\tilde{B}_1(\alpha, \beta, \gamma)=&\sup_{a<x<b}\tilde{B}_1(x; \alpha, \beta, \gamma)\\
=&\sup_{a<x<b}F^{\alpha}(x)G^{\beta}(x)H^{\gamma}(x) \\
=&\sup_{a<x<b}G^{\beta}(x) \sup_{x<y<b}F^{\alpha}(y)H^{\gamma}(y) \\
\approx & \sup_{a<x<b} G^{\beta}(x) \sup_{x<y<b}\left(\int_{x}^{y}f(t)H^{\frac{\gamma}{\alpha-s_1}}(t)\, dt\right)^{\alpha-s_1}F^{s_1}(y) \\
=& \sup_{a<x<b} F^{s_1}(x)\sup_{a<y<x}G^{\beta}(y)\left(\int_{y}^{x}f(t)H^{\frac{\gamma}{\alpha-s_1}}(t)\, dt\right)^{\alpha-s_1}\\
\approx& \sup_{a<x<b} F^{s_1}(x)\sup_{a<y<x}G^{-s_2}(y)\left(\int_{a}^{y}f(t)G^{\frac{\beta+s_2}{\alpha-s_1}}(t) H^{\frac{\gamma}{\alpha-s_1}}(t)\, dt\right)^{\alpha-s_1}\\
=& \sup_{a<x<b} F^{s_1}(x)G^{-s_2}(x)\left(\int_{a}^{x}f(t)G^{\frac{\beta+s_2}{\alpha-s_1}}(t) H^{\frac{\gamma}{\alpha-s_1}}(t)\, dt\right)^{\alpha-s_1}\\
=&\sup_{a<x<b}  \tilde{B}_7(x; \alpha, \beta, \gamma, s_1, s_2) \\
=&\tilde{B}_7(\alpha, \beta, \gamma, s_1, s_2).
\end{align*}
\\~\\
$(vii)$ $\tilde{B}_1(\alpha, \beta, \gamma) \approx \tilde{B}_{8}(\alpha, \beta, \gamma, s_1, s_2)$
\\~\\
By using the equivalences $B_{1}(\alpha, \beta) \approx B_{9}(\alpha, \beta, s)$, $B_{1}(\alpha, \beta) \approx B_{2}(\alpha, \beta, s)$ from Theorem A and in view of Theorem \ref{IThA} (v)
\begin{align*}
\sup_{x<y<b}G^{\beta}(y)F^{\alpha}(y)\approx \sup_{x<y<b}  \left(\int_{y}^{b}f(t)G^{\frac{\beta}{\alpha-s_1}}(t)\, dt\right)^{\alpha-s_1}F^{s_1}(y);\, \, \,  \alpha < s_1, 
\end{align*}
we have   
\begin{align*}
\tilde{B}_1(\alpha, \beta, \gamma)=&\sup_{a<x<b}\tilde{B}_1(x; \alpha, \beta, \gamma)\\
=&\sup_{a<x<b}F^{\alpha}(x)G^{\beta}(x)H^{\gamma}(x) \\
=&\sup_{a<x<b}H^{\gamma}(x) \sup_{x<y<b}F^{\alpha}(y)G^{\beta}(y) \\
\approx & \sup_{a<x<b} H^{\gamma}(x) \sup_{x<y<b}\left(\int_{y}^{b}f(t)G^{\frac{\beta}{\alpha-s_1}}(t)\, dt\right)^{\alpha-s_1}F^{s_1}(y) \\
=& \sup_{a<x<b} \left(\int_{x}^{b}f(t)G^{\frac{\beta}{\alpha-s_1}}(t)\, dt\right)^{\alpha-s_1}F^{s_1}(x)H^{\gamma}(x)\\
=&\sup_{a<x<b}\left(\int_{x}^{b}f(t)G^{\frac{\beta}{\alpha-s_1}}(t)\, dt\right)^{\alpha-s_1}
\sup_{x<y<b} F^{s_1}(y)H^{\gamma}(y)  \\
\approx& \sup_{a<x<b}\left(\int_{x}^{b}f(t)G^{\frac{\beta}{\alpha-s_1}}(t)\, dt\right)^{\alpha-s_1} 
\sup_{x<y<b}H^{s_2}(y)\left(\int_{y}^{b}f(t)H^{\frac{\gamma-s_2}{s_1}}(t)\, dt\right)^{s_1}\\
=&\sup_{a<x<b}\left(\int_{x}^{b}f(t)G^{\frac{\beta}{\alpha-s_1}}(t)\, dt\right)^{\alpha-s_1} 
H^{s_2}(x)\left(\int_{x}^{b}f(t)H^{\frac{\gamma-s_2}{s_1}}(t)\, dt\right)^{s_1}\\
=&\sup_{a<x<b}  \tilde{B}_8(x; \alpha, \beta, \gamma, s_1, s_2) \\
=&\tilde{B}_8(\alpha, \beta, \gamma, s_1, s_2).
\end{align*}
\\~\\
$(viii)$ $\tilde{B}_1(\alpha, \beta, \gamma) \approx \tilde{B}_9(\alpha, \beta, \gamma, s_1, s_2)$
\\~\\
By using the equivalences $B_{1}(\alpha, \beta) \approx B_{9}(\alpha, \beta, s)$, $B_{1}(\alpha, \beta) \approx B_{2}(\alpha, \beta, s)$ from Theorem A and in view of Theorem \ref{IThA} (v)
\begin{align*}
\sup_{x<y<b}H^{\gamma}(y)F^{\alpha}(y)\approx \sup_{x<y<b}  \left(\int_{y}^{b}f(t)H^{\frac{\gamma}{\alpha-s_1}}(t)\, dt\right)^{\alpha-s_1}F^{s_1}(y); \, \, \,  \alpha < s_1,
\end{align*}
we have   
\begin{align*}
\tilde{B}_1(\alpha, \beta, \gamma)=&\sup_{a<x<b}\tilde{B}_1(x; \alpha, \beta, \gamma)\\
=&\sup_{a<x<b}F^{\alpha}(x)G^{\beta}(x)H^{\gamma}(x) \\
=&\sup_{a<x<b}G^{\beta}(x) \sup_{x<y<b}F^{\alpha}(y)H^{\gamma}(y) \\
\approx & \sup_{a<x<b} G^{\beta}(x) \sup_{x<y<b}\left(\int_{y}^{b}f(t)H^{\frac{\gamma}{\alpha-s_1}}(t)\, dt\right)^{\alpha-s_1}F^{s_1}(y) \\
=& \sup_{a<x<b} G^{\beta}(x)F^{s_1}(x)\left(\int_{x}^{b}f(t)H^{\frac{\gamma}{\alpha-s_1}}(t)\, dt\right)^{\alpha-s_1}\\
=& \sup_{a<x<b} \left(\int_{x}^{b}f(t)H^{\frac{\gamma}{\alpha-s_1}}(t)\, dt\right)^{\alpha-s_1}\sup_{x<y<b}F^{s_1}(y)
G^{\beta}(y)\\
\approx& \sup_{a<x<b}\left(\int_{x}^{b}f(t)H^{\frac{\gamma}{\alpha-s_1}}(t)\, dt\right)^{\alpha-s_1}\sup_{x<y<b}G^{s_2}(y)\left(\int_{y}^{b}f(t)G^{\frac{\beta-s_2}{s_1}}(t)\, dt\right)^{s_1}
\\
=&  \sup_{a<x<b}\left(\int_{x}^{b}f(t)H^{\frac{\gamma}{\alpha-s_1}}(t)\, dt\right)^{\alpha-s_1}G^{s_2}(x)\left(\int_{x}^{b}f(t)G^{\frac{\beta-s_2}{s_1}}(t)\, dt\right)^{s_1}
\\
=&\sup_{a<x<b}  \tilde{B}_9(x; \alpha, \beta, \gamma, s_1, s_2) \\
=&\tilde{B}_9(\alpha, \beta, \gamma, s_1, s_2).
\end{align*}
\\~\\
$(ix)$    $\tilde{B}_1(\alpha, \beta, \gamma) \approx \tilde{B}_{11}(\alpha, \beta, \gamma, s)$
\\~\\
Easily follows from $B_1(\alpha, \beta) \approx B_{2}(\alpha, \beta, s)$.
\\~\\
$(x)$  $\tilde{B}_1(\alpha, \beta, \gamma) \approx \tilde{B}_{12}(\alpha, \beta, \gamma, s)$
\\~\\
Similarly from $B_1(\alpha, \beta) \approx B_{4}(\alpha, \beta, s)$.
\\~\\
$(xi)$ $\tilde{B}_1(\alpha, \beta, \gamma) \approx \tilde{B}_{16}(\alpha, \beta, \gamma, s)$
\\~\\
By using the equivalences $B_{1}(\alpha, \beta) \approx B_{12}(\alpha, \beta, s)$ from Theorem A and in view of Theorem \ref{IThA} (vi)
\begin{align*}
\sup_{x<y<b}H^{\gamma}(y)F^{\alpha}(y)\approx \inf_{h_1 \geq 0}\sup_{x<y<b}\Big(h_1(y)+H(y)\Big)^{s} \left(\int_{y}^{b}f(t)h_{1}^{\frac{\gamma-s}{\alpha}}(t)\, dt\right)^{\alpha}; \quad \gamma<s,
\end{align*}
we have   
\begin{align*}
\tilde{B}_1(\alpha, \beta, \gamma)=&\sup_{a<x<b}\tilde{B}_1(x; \alpha, \beta, \gamma)\\
=&\sup_{a<x<b}F^{\alpha}(x)G^{\beta}(x)H^{\gamma}(x) \\
=&\sup_{a<x<b}G^{\beta}(x) \sup_{x<y<b}F^{\alpha}(y)H^{\gamma}(y) \\
\approx & \inf_{h_1 \geq 0}\sup_{a<x<b} G^{\beta}(x) \sup_{x<y<b}\Big(h_1(y)+H(y)\Big)^s\left(\int_{y}^{b}f(t)h_1^{\frac{\gamma-s}{\alpha}}(t)\, dt\right)^{\alpha}\\
=& \inf_{h_1 \geq 0}\sup_{a<x<b} \left(\int_{x}^{b}f(t)h_1^{\frac{\gamma-s}{\alpha}}(t)\, dt\right)^{\alpha} G^{\beta}(x)\Big(h_1(x)+H(x)\Big)^{s} \\
=&\inf_{h_1 \geq 0} \sup_{a<x<b}  \tilde{B}_{16}(x; \alpha, \beta, \gamma, s; h_1) \\
=&\tilde{B}_{16}(\alpha, \beta, \gamma, s).
\end{align*}
\\~\\
$(xii)$ $\tilde{B}_1(\alpha, \beta, \gamma) \approx \tilde{B}_{17}(\alpha, \beta, \beta, s)$
\\~\\
By using the equivalences $B_{1}(\alpha, \beta) \approx B_{12}(\alpha, \beta, s)$ from Theorem A and in view of Theorem \ref{IThA} (vi)
\begin{align*}
\sup_{x<y<b}G^{\beta}(y)F^{\alpha}(y)\approx \inf_{h_1 \geq 0}\sup_{x<y<b}\Big(h_1(y)+G(y)\Big)^{s} \left(\int_{y}^{b}f(t)h_{1}^{\frac{\beta-s}{\alpha}}(t)\, dt\right)^{\alpha}; \quad \beta<s,
\end{align*}
we have   
\begin{align*}
\tilde{B}_1(\alpha, \beta, \gamma)=&\sup_{a<x<b}\tilde{B}_1(x; \alpha, \beta, \gamma)\\
=&\sup_{a<x<b}F^{\alpha}(x)G^{\beta}(x)H^{\gamma}(x) \\
=&\sup_{a<x<b}H^{\gamma}(x) \sup_{x<y<b}F^{\alpha}(y)G^{\beta}(y) \\
\approx & \inf_{h_1 \geq 0}\sup_{a<x<b} H^{\gamma}(x) \sup_{x<y<b}\Big(h_1(y)+G(y)\Big)^s\left(\int_{y}^{b}f(t)h_1^{\frac{\beta-s}{\alpha}}(t)\, dt\right)^{\alpha}\\
=& \inf_{h_1 \geq 0}\sup_{a<x<b} \left(\int_{x}^{b}f(t)h_1^{\frac{\beta-s}{\alpha}}(t)\, dt\right)^{\alpha} H^{\gamma}(x)\Big(h_1(x)+G(x)\Big)^{s} \\
=&\inf_{h_1 \geq 0} \sup_{a<x<b}  \tilde{B}_{17}(x; \alpha, \beta, \gamma, s; h_1) \\
=&\tilde{B}_{17}(\alpha, \beta, \gamma, s).
\end{align*}
\\~\\
$(xiii)$ $\tilde{B}_1(\alpha, \beta, \gamma) \approx \tilde{B}_{18}(\alpha, \beta, \gamma, s_1, s_2)$
\\~\\
By using the equivalence $B_{1}(\alpha, \beta) \approx B_{12}(\alpha, \beta, s)$ from Theorem A  and in view of Theorem \ref{IThA} (vi)
\begin{align*}
\sup_{x<y<b} F^\alpha(y)G^\beta(y)
\approx \inf_{h_1>0} \sup_{x<y<b}\Big(h_1(y) +G(y)\Big)^{s} \left(\int_y^b f(z)h_1^{\frac{\beta-s}{\alpha}}(z)dz\right)^{\alpha}; \quad \beta <s
\end{align*}
and also the fact that $H^{\gamma}(x) \geq 0$ as well as $\Big(h_1(x)+G(x)\Big)^{s_1} \geq 0$, we get
\begin{align*}
\tilde{B}_1(\alpha, \beta, \gamma)=&\sup_{a<x<b}\tilde{B}_1(x; \alpha, \beta, \gamma)\\
=&\sup_{a<x<b}F^{\alpha}(x)G^{\beta}(x)H^{\gamma}(x) \\
=&\sup_{a<x<b}H^{\gamma}(x)\sup_{x<y<b}F^{\alpha}(y)G^{\beta}(y) \\
\approx &\sup_{a<x<b}H^{\gamma}(x)\inf_{h_1 \geq 0}\sup_{x<y<b}\left(\int_{y}^{b}f(t)h_1^{\frac{\beta-s_1}{\alpha}}(t)\, dt\right)^{\alpha}\Big(h_1(y)+G(y)\Big)^{s_1} \\
=&\inf_{h_1 \geq 0}\sup_{a<x<b}H^{\gamma}(x)\sup_{x<y<b}\left(\int_{y}^{b}f(t)h_1^{\frac{\beta-s_1}{\alpha}}(t)\, dt\right)^{\alpha}\Big(h_1(y)+G(y)\Big)^{s_1}  \\
\leq &\inf_{h_1 \geq 0}\sup_{a<x<b} \left\{ \sup_{a< y <x}\Big(h_1(y)+G(y)\Big)^{s_1}\right\} \left\{\sup_{x<y<b}\left(\int_{y}^{b}f(t)h_1^{\frac{\beta-s_1}{\alpha}}(t)\, dt\right)^{\alpha}H^{\gamma}(y) \right\}  \\
\approx &\inf_{h_1 \geq 0}\sup_{a<x<b} \left\{ \sup_{a< y <x}\Big(h_1(y)+G(y)\Big)^{s_1}\right\} \, \times  \\
& \hskip 5em  \times \Bigg\{\inf_{h_2 \geq 0}\sup_{x<y<b}\left(\int_{y}^{b}f(t)h_1^{\frac{\beta-s_1}{\alpha}}(t)h_2^{\frac{\gamma-s_2}{\alpha}}(t)\, dt\right)^{\alpha} \Big(h_2(y)+H(y)\Big)^{s_2} \Bigg\}  \\
=&\inf_{h_1 \geq 0}\inf_{h_2 \geq 0}\sup_{a<x<b} \left\{ \sup_{a< y <x}\Big(h_1(y)+G(y)\Big)^{s_1}\right\} \, \times \\
& \hskip 5em  \times \left\{ \sup_{x<y<b}\left(\int_{y}^{b}f(t)h_1^{\frac{\beta-s_1}{\alpha}}(t)h_2^{\frac{\gamma-s_2}{\alpha}}(t)\, dt\right)^{\alpha}\Big(h_2(y)+H(y)\Big)^{s_2} \right\} \\
=&\inf_{h_1, h_2 \geq 0}\sup_{a<x<b}\left(\int_{x}^{b}f(t)h_1^{\frac{\beta-s_1}{\alpha}}(t)h_2^{\frac{\gamma-s_2}{\alpha}}(t)\, dt\right)^{\alpha}\, \times \\
& \hskip 8em \times \left\{ \sup_{a< y <x}\Big(h_1(y)+G(y)\Big)^{s_1}\right\} \Big(h_2(x)+H(x)\Big)^{s_2} \\
=& \inf_{h_1, h_2 \geq 0}\sup_{a<x<b} \tilde{B}_{18}(x; \alpha, \beta, \gamma, s_1, s_2; h_1, h_2) \\
=&\tilde{B}_{18}(\alpha, \beta, \gamma, s_1, s_2).
\end{align*}
Thus
\begin{align} \label{e2.2}
\tilde{B}_1(\alpha, \beta, \gamma) \lesssim \tilde{B}_{18}(\alpha, \beta, \gamma, s_1, s_2).
\end{align}
Since for $h_1(x)= G(x)$ and $h_2(x)=H(x)$, we have
\begin{align*}
\tilde{B}_{18}(x; \alpha, \beta, \gamma, s_1, s_2; G, H)=&2^{s_1+s_2}\left(\int_{x}^{b}f(t)G^{\frac{\beta-s_1}{\alpha}}(t)H^{\frac{\gamma-s_2}{\alpha}}(t)\, dt\right)^{\alpha} G^{s_1}(x) H^{s_2}(x) \\
=& 2^{s_1+s_2} \tilde{B}_{2}(x; \alpha, \beta, \gamma, s_1, s_2) \\
\leq & 2^{s_1+s_2} \sup_{a<x<b}\tilde{B}_{2}(x; \alpha, \beta, \gamma, s_1, s_2) \\
= & 2^{s_1+s_2} \tilde{B}_{2}(\alpha, \beta, \gamma, s_1, s_2) \\
\approx & 2^{s_1+s_2} \tilde{B}_{1}(\alpha, \beta, \gamma).
\end{align*}
The last relation holds by using $\tilde{B}_{1}(\alpha, \beta, \gamma) \approx \tilde{B}_{2}(\alpha, \beta, \gamma, s_1, s_2)$.
Clearly we get that
\begin{align} \label{e2.3}
\tilde{B}_{18}(\alpha, \beta, \gamma, s_1, s_2) \lesssim \tilde{B}_{1}(\alpha, \beta, \gamma).
\end{align}
In view of \eqref{e2.2} and \eqref{e2.3}, we obtain
\begin{align*}
\tilde{B}_1(\alpha, \beta, \gamma) \approx \tilde{B}_{18}(\alpha, \beta, \gamma, s_1, s_2).
\end{align*}
\\~\\
$(xiv)$ $\tilde{B}_1(\alpha, \beta, \gamma) \approx \tilde{B}_{19}(\alpha, \beta, \gamma, s_1, s_2)$
\\~\\
Using the similar reasoning as in $(xiii)$.
\\~\\
$(xv)$ $\tilde{B}_1(\alpha, \beta, \gamma) \approx \tilde{B}_{20}(\alpha, \beta, \gamma, s_1, s_2)$
\\~\\
Using the similar reasoning as in $(xiii)$.
\\~\\
$(xvi)$ $\tilde{B}_1(\alpha, \beta, \gamma) \approx \tilde{B}_{21}(\alpha, \beta, \gamma, s_1, s_2)$
\\~\\
By using the equivalence $B_{1}(\alpha, \beta) \approx B_{14}(\alpha, \beta, s)$ from Theorem A  and in view of Theorem \ref{IThA} (vii)
\begin{align*} 
\sup_{x<y<b}F^\alpha(y)G^\beta(y)
&\approx \inf_{h_1>0} \sup_{x<y<b}h_1^{-s_1}(y) \left(\int_x^y f(z)\Big(h_1(z)+G(z)\Big)^{\frac{\beta+s_1}{\alpha}}dz\right)^{\alpha}
\end{align*}
and also the fact that $H^{\gamma}(x) \geq 0$ as well as $h_1^{-s_1}(x) \geq 0 $, we get
\begin{align*}
\tilde{B}_1(\alpha, \beta, \gamma)=&\sup_{a<x<b}\tilde{B}_1(x; \alpha, \beta, \gamma)\\
=&\sup_{a<x<b}F^{\alpha}(x)G^{\beta}(x)H^{\gamma}(x) \\
=&\sup_{a<x<b}H^{\gamma}(x)\sup_{x<y<b}F^{\alpha}(y)G^{\beta}(y) \\
\approx &\sup_{a<x<b}H^{\gamma}(x)\inf_{h_1 \geq 0}\sup_{x<y<b}\left(\int_{x}^{y}f(t)\Big(h_1(t)+G(t)\Big)^{\frac{\beta+s_1}{\alpha}}\, dt\right)^{\alpha}h_1^{-s_1}(y) \\
=&\inf_{h_1 \geq 0}\sup_{a<x<b}H^{\gamma}(x)\sup_{x<y<b}h_1^{-s_1}(y)\left(\int_{x}^{y}f(t)\Big(h_1(t)+G(t)\Big)^{\frac{\beta+s_1}{\alpha}}\, dt\right)^{\alpha} \\
\leq &\inf_{h_1 \geq 0}\sup_{a<y<b}\left\{ \sup_{y<x<b}h_1^{-s_1}(x)\right\} \left\{\sup_{a<x<y}\left(\int_{x}^{y}f(t)\Big(h_1(t)+G(t)\Big)^{\frac{\beta+s_1}{\alpha}}\, dt\right)^{\alpha} H^{\gamma}(x)\right\} \\
\approx &\inf_{h_1 \geq 0}\sup_{a<y<b} \left\{ \sup_{y<x<b}h_1^{-s_1}(x)\right\} \, \times  \\
& \hskip 2em  \times \Bigg\{\inf_{h_2 \geq 0}\sup_{a<x<y}h_2^{-s_2}(x) \left(\int_{a}^{x}f(t)\Big(h_1(t)+G(t)\Big)^{\frac{\beta+s_1}{\alpha}}\Big(h_2(t)+H(t)\Big)^{\frac{\gamma+s_2}{\alpha}}\, dt\right)^{\alpha} \Bigg\}\\
=&\inf_{h_1 \geq 0}\inf_{h_2 \geq 0}\sup_{a<y<b} \left\{ \sup_{y<x<b}h_1^{-s_1}(x)\right\}\, \times  \\
& \hskip 3em \times  \Bigg\{\sup_{a<x<y}h_2^{-s_2}(x) \left(\int_{a}^{x}f(t)\Big(h_1(t)+G(t)\Big)^{\frac{\beta+s_1}{\alpha}}\Big(h_2(t)+H(t)\Big)^{\frac{\gamma+s_2}{\alpha}}\, dt\right)^{\alpha} \Bigg\} \\
=&\inf_{h_1, h_2 \geq 0}\sup_{a<x<b}\left(\int_{a}^{x}f(t)\Big(h_1(t)+G(t)\Big)^{\frac{\beta+s_1}{\alpha}}\Big(h_2(t)+H(t)\Big)^{\frac{\gamma+s_2}{\alpha}}\, dt\right)^{\alpha}\, \times \\
& \hskip 20em \times \left\{ \sup_{x<y<b}h_1^{-s_1}(y)\right\} h_2^{-s_2}(x) \\
=&\inf_{h_1, h_2 \geq 0}\sup_{a<x<b} \tilde{B}_{21}(x; \alpha, \beta, \gamma, s_1, s_2; h_1, h_2) \\
=&\tilde{B}_{21}(\alpha, \beta, \gamma, s_1, s_2).
\end{align*}
Thus
\begin{align} \label{e2.4}
\tilde{B}_1(\alpha, \beta, \gamma) \lesssim \tilde{B}_{21}(\alpha, \beta, \gamma, s_1, s_2).
\end{align}
Since for $h_1(x)= G(x)$ and $h_2(x)=H(x)$, we have
\begin{align*}
\tilde{B}_{21}(x; \alpha, \beta, \gamma, s_1, s_2; G, H)=&2^{\beta+\gamma+s_1+s_2}\left(\int_{a}^{x}f(t)G^{\frac{\beta+s_1}{\alpha}}(t)H^{\frac{\gamma+s_2}{\alpha}}(t)\, dt\right)^{\alpha} G^{-s_1}(x) H^{-s_2}(x) \\
=& 2^{\beta+\gamma+s_1+s_2} \tilde{B}_{3}(x; \alpha, \beta, \gamma, s_1, s_2) \\
\leq & 2^{\beta+\gamma+s_1+s_2} \sup_{a<x <b}\tilde{B}_{3}(x; \alpha, \beta, \gamma, s_1, s_2) \\
=& 2^{\beta+\gamma+s_1+s_2} \tilde{B}_{3}(\alpha, \beta, \gamma, s_1, s_2) \\
\approx &  2^{\beta+\gamma+s_1+s_2} \tilde{B}_{1}(\alpha, \beta, \gamma).
\end{align*}
The last relation holds by using $\tilde{B}_{1}(\alpha, \beta, \gamma) \approx \tilde{B}_{3}(\alpha, \beta, \gamma, s_1, s_2)$. Clearly we get that
\begin{align} \label{e2.5}
\tilde{B}_{21}(\alpha, \beta, \gamma, s_1, s_2) \lesssim \tilde{B}_{1}(\alpha, \beta, \gamma).
\end{align}
Therefore, in view of \eqref{e2.4} and \eqref{e2.5}, we find
\begin{align*}
\tilde{B}_1(\alpha, \beta, \gamma) \approx \tilde{B}_{21}(\alpha, \beta, \gamma, s_1, s_2).
\end{align*}
\\~\\
$(xvii)$ $\tilde{B}_1(\alpha, \beta, \gamma) \approx  \tilde{B}_{22}(\alpha, \beta, \gamma, s_1, s_2)$
\\~\\
Using the similar reasoning as in $(xvi)$.
\\~\\
$(xviii)$ $\tilde{B}_1(\alpha, \beta, \gamma) \approx  \tilde{B}_{23}(\alpha, \beta, \gamma, s_1, s_2)$
\\~\\
Using the similar reasoning as in $(xvi)$.
\end{proof}

We apply Theorem \ref{t2.1} and provide equivalent conditions for the bilinear inequality \eqref{H2} to hold.

\begin{theorem}\label{t2.2}
Let $1< \max(p_1,p_2) \leq q < \infty$ with $s,s_1,s_2$ be positive numbers. Define $U$ as in \eqref{e1} and
\begin{equation*}
V_i(x)= \int_{a}^{x}v_i^{1-p_i'}(t)\, dt, \quad i=1,2.
\end{equation*}
Moreover define
\begin{align*}
&\tilde{A}_{1}(s_{1},s_{2}):= {\displaystyle \sup_{a<x<b}\left({\int_{x}^{b}u(t)V_1^{\left(\frac{\frac{1}{p_1'}-s_{1}}{\frac{1}{q}}\right)}(t)V_2^{\left(\frac{\frac{1}{p_2'}-s_{2}}{\frac{1}{q}}\right)}(t)\, dt}\right)^{\frac{1}{q}}V_1^{s_{1}}(x)V_2^{s_{2}}(x)};\\
&\tilde{A}_{2}(s_{1},s_{2}):=\sup_{a<x<b} \left({\int_{a}^{x}u(t)V_1^{\left(\frac{\frac{1}{p_1'}+s_1}{\frac{1}{q}}\right)}(t)V_2^{\left(\frac{\frac{1}{p_2'}+s_2}{\frac{1}{q}}\right)}(t)\, dt}\right)^{\frac{1}{q}}V_1^{-s_{1}}(x)V_2^{-s_{2}}(x); \\
& \tilde{A}_{3}(s_1, s_2):= \sup_{a<x<b} \left({\int_{a}^{x}u(t)V_1^{\left(\frac{\frac{1}{p_1'}}{\frac{1}{q}+s_1}\right)}(t)V_2^{\left(\frac{\frac{1}{p_2'}-s_2}{\frac{1}{q}+s_1}\right)}(t)\, dt}\right)^{\frac{1}{q}+s_1}U^{-s_{1}}(x)V_2^{s_{2}}(x); \\
& \tilde{A}_{4}(s_1, s_2):= \sup_{a<x<b} \left({\int_{a}^{x}u(t)V_1^{\left(\frac{\frac{1}{p_1'}-s_2}{\frac{1}{q}+s_1}\right)}(t)V_2^{\left(\frac{\frac{1}{p_2'}}{\frac{1}{q}+s_1}\right)}(t)\, dt}\right)^{\frac{1}{q}+s_1}U^{-s_{1}}(x)V_1^{s_{2}}(x); \\
& \tilde{A}_{5}(s_1, s_2):= \sup_{a<x<b} \left({\int_{a}^{x}u(t)V_1^{\left(\frac{\frac{1}{p_1'}}{\frac{1}{q}-s_1}\right)}(t)V_2^{\left(\frac{\frac{1}{p_2'}+s_2}{\frac{1}{q}-s_1}\right)}(t)\, dt}\right)^{\frac{1}{q}-s_1}U^{s_{1}}(x)V_2^{-s_{2}}(x), \quad \frac{1}{q} > s_1; \\
& \tilde{A}_{6}(s_1, s_2):= \sup_{a<x<b} \left({\int_{a}^{x}u(t)V_1^{\left(\frac{\frac{1}{p_1'}+s_2}{\frac{1}{q}-s_1}\right)}(t)V_2^{\left(\frac{\frac{1}{p_2'}}{\frac{1}{q}-s_1}\right)}(t)\, dt}\right)^{\frac{1}{q}-s_1}U^{s_{1}}(x)V_1^{-s_{2}}(x), \quad \frac{1}{q} > s_1; \\
& \tilde{A}_{7}(s_1, s_2):= \sup_{a<x<b} \left({\int_{x}^{b}u(t)V_1^{\left(\frac{\frac{1}{p_1'}}{\frac{1}{q}-s_1}\right)}(t) \, dt}\right)^{\frac{1}{q}-s_1}V_2^{s_{2}}(x)\left({\int_{x}^{b}u(t)V_2^{\left(\frac{\frac{1}{p_2'}-s_2}{s_1}\right)}(t)\, dt}\right)^{s_1}, \quad \frac{1}{q} < s_1; \\
& \tilde{A}_{8}(s_1, s_2):= \sup_{a<x<b} \left({\int_{x}^{b}u(t)V_2^{\left(\frac{\frac{1}{p_2'}}{\frac{1}{q}-s_1}\right)}(t) \, dt}\right)^{\frac{1}{q}-s_1}V_1^{s_{2}}(x)\left({\int_{x}^{b}u(t)V_1^{\left(\frac{\frac{1}{p_1'}-s_2}{s_1}\right)}(t)\, dt}\right)^{s_1}, \quad \frac{1}{q} < s_1; \\
&\tilde{A}_{9}(s_1, s_2):= \sup_{a<x<b} \left(\int_{a}^{x}v_1^{1-p_1'}(t)U^{\left(\frac{\frac{1}{2q}}{\frac{1}{p_1'}+s_1}\right)}(t)\, dt\right)^{\frac{1}{p_1'}+s_1}\left(\int_{a}^{x}v_2^{1-p_2'}(t)U^{\left(\frac{\frac{1}{2q}}{\frac{1}{p_2'}+s_2}\right)}(t)\, dt\right)^{\frac{1}{p_2'}+s_2}\, \times \\
& \hskip 30em \times V_1^{-s_1}(x)V_2^{-s_2}(x); \\
& \tilde{A}_{10}(s):= \sup_{a<x<b} \left(\int_{x}^{b}u(t) V_1^{\left(\frac{\frac{1}{p_1'}(1-s)}{\frac{1}{q}}\right)}(t)V_2^{\left(\frac{\frac{1}{p_2'}(1-s)}{\frac{1}{q}}\right)}(t)\, dt\right)^{\frac{1}{q}}V_1^{\frac{s}{p_1'}}(x)V_2^{\frac{s}{p_2'}}(x); \\
& \tilde{A}_{11}(s):= \sup_{a<x<b} \left(\int_{a}^{x}u(t) V_1^{\left(\frac{\frac{1}{p_1'}(1+s)}{\frac{1}{q}}\right)}(t)V_2^{\left(\frac{\frac{1}{p_2'}(1+s)}{\frac{1}{q}}\right)}(t)\, dt\right)^{\frac{1}{q}}V_1^{-\frac{s}{p_1'}}(x)V_2^{-\frac{s}{p_2'}}(x); \\
&\tilde{A}_{12}(s):= \sup_{a<x<b} \left(\int_{x}^{b}u(t)V_1^{\left(\frac{\frac{1}{p_1'}}{\frac{1}{q}+s}\right)}(t)V_2^{\left(\frac{\frac{1}{p_2'}}{\frac{1}{q}+s}\right)}(t)\, dt\right)^{\frac{1}{q}+s}U^{-s}(x); \\
&\tilde{A}_{13}(s):= \sup_{a<x<b} \left(\int_{a}^{x}u(t)V_1^{\left(\frac{\frac{1}{p_1'}}{\frac{1}{q}-s}\right)}(t)V_2^{\left(\frac{\frac{1}{p_2'}}{\frac{1}{q}-s}\right)}(t)\, dt\right)^{\frac{1}{q}-s}U^{s}(x), \quad \frac{1}{q} > s; \\
&\tilde{A}_{14}(s):= \sup_{a<x<b} \left(\int_{x}^{b}u(t)V_1^{\left(\frac{\frac{1}{p_1'}}{\frac{1}{q}-s}\right)}(t)V_2^{\left(\frac{\frac{1}{p_2'}}{\frac{1}{q}-s}\right)}(t)\, dt\right)^{\frac{1}{q}-s}U^{s}(x), \quad \frac{1}{q} < s; \\
&\tilde{A}_{15}(s):= \inf_{h_1 \geq 0} \sup_{a<x<b} \left(\int_{x}^{b}u(t)h_1^{\left(\frac{\frac{1}{p_2'}-s}{\frac{1}{q}}\right)}(t)\, dt\right)^{\frac{1}{q}} V_1^{\frac{1}{p_1'}}(x)\Bigg(h_1(x)+ V_2(x)\Bigg)^s, \quad \frac{1}{p_2'} < s; \\
&\tilde{A}_{16}(s):=  \inf_{h_1 \geq 0} \sup_{a<x<b} \left(\int_{x}^{b} u(t)h_1^{\left(\frac{\frac{1}{p_1'}-s}{\frac{1}{q}}\right)}(t)\, dt\right)^{\frac{1}{q}} V_2^{\frac{1}{p_2'}}(x)\Bigg(h_1(x)+ V_1(x)\Bigg)^s, \quad \frac{1}{p_1'} < s; \\
& \tilde{A}_{17}(s_1, s_2):= \inf_{h_1, h_2 \geq 0} \sup_{a<x<b} \left(\int_{x}^{b} u(t)h_1^{\left(\frac{\frac{1}{p_1'}-s_1}{\frac{1}{q}}\right)}(t) h_2^{\left(\frac{\frac{1}{p_2'}-s_2}{\frac{1}{q}}\right)}(t) \, dt\right)^{\frac{1}{q}} \times \\
& \hskip 9em \times \left\{\sup_{a<y<x} \Bigg(h_1(y)+ V_1(y)\Bigg)^{s_1}\right\} \Bigg(h_2(x)+ V_2(x)\Bigg)^{s_2}, \quad \frac{1}{p_1'}<s_1,\, \frac{1}{p_2'}<s_2; \\
& \tilde{A}_{18}(s_1, s_2):= \inf_{h_1, h_2 \geq 0} \sup_{a<x<b} \left(\int_{x}^{b} u(t)h_1^{\left(\frac{\frac{1}{p_1'}-s_1}{\frac{1}{q}}\right)}(t) h_2^{\left(\frac{\frac{1}{p_2'}-s_2}{\frac{1}{q}}\right)}(t) \, dt\right)^{\frac{1}{q}} \times \\
& \hskip 9em \times \Bigg(h_1(x)+ V_1(x)\Bigg)^{s_1} \left\{\sup_{a<y<x}\Bigg(h_2(y)+ V_2(y)\Bigg)^{s_2}\right\}, \quad \frac{1}{p_1'}<s_1,\, \frac{1}{p_2'}<s_2; \\
& \tilde{A}_{19}(s_1, s_2):= \inf_{h_1, h_2 \geq 0} \sup_{a<x<b} \left(\int_{x}^{b} u(t)h_1^{\left(\frac{\frac{1}{p_1'}-s_1}{\frac{1}{q}}\right)}(t) h_2^{\left(\frac{\frac{1}{p_2'}-s_2}{\frac{1}{q}}\right)}(t) \, dt\right)^{\frac{1}{q}} \times \\
& \hskip 6em \times \left\{\sup_{a<y<x} \Bigg(h_1(y)+ V_1(y)\Bigg)^{s_1}\right\} \left\{\sup_{a<y<x}\Bigg(h_2(y)+ V_2(y)\Bigg)^{s_2}\right\}, \quad \frac{1}{p_1'}<s_1,\, \frac{1}{p_2'}<s_2; \\
&\tilde{A}_{20}(s_1, s_2):=  \inf_{h_1, h_2 \geq 0} \sup_{a<x<b} \left(\int_{a}^{x}u(t)\Bigg(h_1(t)+V_1(t)\Bigg)^{\left(\frac{\frac{1}{p_1'}+s_1}{\frac{1}{q}}\right)}\Bigg(h_2(t)+V_2(t)\Bigg)^{\left(\frac{\frac{1}{p_2'}+s_2}{\frac{1}{q}}\right)}\, dt\right)^{\frac{1}{q}}\, \times  \\
& \hskip 25em  \times \left\{\sup_{x<y<b}h_1^{-s_1}(y)\right\}h_2^{-s_2}(x); \\
&\tilde{A}_{21}(s_1, s_2):=  \inf_{h_1, h_2 \geq 0} \sup_{a<x<b} \left(\int_{a}^{x}u(t)\Bigg(h_1(t)+V_1(t)\Bigg)^{\left(\frac{\frac{1}{p_1'}+s_1}{\frac{1}{q}}\right)}\Bigg(h_2(t)+V_2(t)\Bigg)^{\left(\frac{\frac{1}{p_2'}+s_2}{\frac{1}{q}}\right)}\, dt\right)^{\frac{1}{q}}\, \times  \\
& \hskip 25em  \times h_1^{-s_1}(x) \left\{\sup_{x<y<b}h_2^{-s_2}(y)\right\}; \\
&\tilde{A}_{22}(s_1, s_2):=  \inf_{h_1, h_2 \geq 0} \sup_{a<x<b} \left(\int_{a}^{x}u(t)\Bigg(h_1(t)+V_1(t)\Bigg)^{\left(\frac{\frac{1}{p_1'}+s_1}{\frac{1}{q}}\right)}\Bigg(h_2(t)+V_2(t)\Bigg)^{\left(\frac{\frac{1}{p_2'}+s_2}{\frac{1}{q}}\right)}\, dt\right)^{\frac{1}{q}}\, \times  \\
& \hskip 25em  \times \left\{\sup_{x<y<b}h_1^{-s_1}(y)\right\} \left\{\sup_{x<y<b}h_2^{-s_2}(y)\right\}.
\end{align*}
Then the inequality \eqref{H2} holds for all $f,g \in \mathfrak{M}^{+}$ if and only if any of the $\tilde{A}_i(s_1,s_2)$,$\tilde{A}_j(s),\, (i=1, 2,..., 9, 17, 18,..., 22),\, (j=10, 11,..., 16)$ is finite. Also the best constant $C$ in \eqref{H2} satisfies $C \approx \tilde{A}_i(s_1,s_2) \approx \tilde{A}_j(s), \, (i=1, 2,..., 9, 17, 18,..., 22), \, (j=10, 11,..., 16)$.
\end{theorem}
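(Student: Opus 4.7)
The plan is to reduce this theorem to a direct application of Theorem B together with Theorem \ref{t2.1} via a suitable substitution of the generic functions $f,g,h$ and exponents $\alpha,\beta,\gamma$.

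First, I would apply Theorem B, which already tells us that \eqref{H2} holds (with comparable best constant) if and only if $\mathcal{D}<\infty$, where
\[
\mathcal{D}=\sup_{a<x<b} U(x)^{1/q}\,V_1(x)^{1/p_1'}\,V_2(x)^{1/p_2'}.
\]
Thus, everything reduces to showing that each $\tilde A_i(s_1,s_2)$ (respectively $\tilde A_j(s)$) is equivalent to $\mathcal{D}$.

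Next, I would apply Theorem \ref{t2.1} with the identifications
\[
f(t):=u(t),\qquad g(t):=v_1^{1-p_1'}(t),\qquad h(t):=v_2^{1-p_2'}(t),
\]
so that $F(x)=U(x)$, $G(x)=V_1(x)$, $H(x)=V_2(x)$, and with the exponents
\[
\alpha:=\tfrac{1}{q},\qquad \beta:=\tfrac{1}{p_1'},\qquad \gamma:=\tfrac{1}{p_2'}.
\]
With these choices, the basic quantity $\tilde B_1(\alpha,\beta,\gamma)=\sup_{x}F^{\alpha}G^{\beta}H^{\gamma}$ becomes exactly $\mathcal{D}$. Then each of the twenty-three expressions $\tilde B_i$ defined in Theorem \ref{t2.1} transforms, under the above substitution, into the corresponding expression $\tilde A_i$ listed in the statement: for example, $\tilde B_2$ yields $\tilde A_1$, $\tilde B_3$ yields $\tilde A_2$, $\tilde B_4,\ldots,\tilde B_9$ yield $\tilde A_3,\ldots,\tilde A_8$, $\tilde B_{10}$ yields $\tilde A_9$, $\tilde B_{11},\ldots,\tilde B_{15}$ yield $\tilde A_{10},\ldots,\tilde A_{14}$, the $h_1$-dependent $\tilde B_{16},\tilde B_{17}$ yield $\tilde A_{15},\tilde A_{16}$, and the $(h_1,h_2)$-dependent $\tilde B_{18},\ldots,\tilde B_{23}$ yield $\tilde A_{17},\ldots,\tilde A_{22}$. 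The side conditions such as $\alpha>s_1$ or $\beta<s$ translate respectively into $1/q>s_1$, $1/p_1'<s$, etc., which are precisely the hypotheses imposed in the definitions of $\tilde A_i$. By Theorem \ref{t2.1}, all these quantities are mutually equivalent to $\tilde B_1(\alpha,\beta,\gamma)=\mathcal{D}$.

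Chaining the two equivalences yields, for the best constant $C$ in \eqref{H2},
\[
C \;\approx\; \mathcal{D} \;\approx\; \tilde A_i(s_1,s_2) \;\approx\; \tilde A_j(s),
\]
for the relevant ranges of $i$ and $j$, and in particular finiteness of any one of these expressions is equivalent to finiteness of all the others, which is equivalent to the validity of \eqref{H2}. The only non-routine part is verifying the dictionary between the $\tilde B_i$'s of Theorem \ref{t2.1} and the $\tilde A_i$'s of the statement; this is a direct but tedious matter of substituting $\alpha=1/q$, $\beta=1/p_1'$, $\gamma=1/p_2'$ in the exponents of $G,H,F$ (for instance, $(\beta-s_1)/\alpha=((1/p_1')-s_1)/(1/q)$, etc.) and checking that the admissibility conditions on $s,s_1,s_2$ match. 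No new analytic argument beyond Theorems B and \ref{t2.1} is required.
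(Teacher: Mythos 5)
Your proposal is correct and follows essentially the same route as the paper: the paper likewise substitutes $f=u$, $g=v_1^{1-p_1'}$, $h=v_2^{1-p_2'}$ and $\alpha=\tfrac1q$, $\beta=\tfrac1{p_1'}$, $\gamma=\tfrac1{p_2'}$ to identify $\tilde A_i$ with $\tilde B_{i+1}$, then invokes Theorem \ref{t2.1} together with Theorem B to conclude $C\approx\mathcal{D}=\tilde B_1\approx\tilde A_i$. No substantive difference.
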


\begin{proof}
By putting $f(x)=u(x)$,$g(x)=v_1^{1-p_1'}(x)$ in \eqref{e1.4} and $h(x)=v_2^{1-p_2'}(x)$ in \eqref{e2.1}, we get
\begin{equation*}
F(x)=U(x),\, G(x)=V_1(x), \,H(x)=V_2(x).
\end{equation*}
Consequently for $\alpha=\frac{1}{q}$, $\beta=\frac{1}{p_1'}$ and $\gamma=\frac{1}{p_2'}$, we have
\begin{align*}
&\tilde{A}_i(s_1,s_2)=\tilde{B}_{i+1}\left(\frac{1}{q},\frac{1}{p_1'},\frac{1}{p_2'},s_1,s_2\right); \quad (i=1, 2,..., 9, 17, 18,..., 22), \\
&\tilde{A}_j(s)=\tilde{B}_{j+1}\left(\frac{1}{q},\frac{1}{p_1'},\frac{1}{p_2'},s\right); \quad (j=10, 11,..., 16).
\end{align*}
Now by Theorem \ref{t2.1}, these are all equivalent to $\tilde{B}_1\left(\frac{1}{q},\frac{1}{p_1'},\frac{1}{p_2'}\right)\overset{\eqref{BHC}}=\mathcal{D}$and $\mathcal{D}$ is necessary and sufficient condition for the inequality \eqref{H2} to hold.

Since the constant $C$ in \eqref{H2} verifies $C \approx \mathcal{D}=\tilde{B}_1\left(\frac{1}{q},\frac{1}{p_1'},\frac{1}{p_2'}\right)$, it follows that $C \approx \tilde{A}_i(s_1,s_2) \approx \tilde{A}_j(s),\, (i=1, 2,..., 9, 17, 18,..., 22), \, (j=10, 11,..., 16)$.
\end{proof}

\section{Bilinear Type Geometric Mean Operator}
Consider the Hardy averaging operator $A:$
\begin{align*}
(Af)(x):= \frac{1}{x}\int_{0}^{x}f(t)\, dt
\end{align*}
and the geometric mean operator $T$ which is defined by
\begin{align*}
(Tf)(x):= \exp\left(\frac{1}{x}\int_{0}^{x} \ln(f(t))\, dt\right), \quad f \in \mathfrak{M}^{+}.
\end{align*}
Note that
\begin{equation}
\label{LHA}
\lim_{\alpha\to 0}\Big\{(Af^{\alpha})(x)\Big\}^{\frac{1}{\alpha}}=Tf(x).
\end{equation}
Therefore, it seems reasonable to study the inequality
\begin{equation}
\label{GM}
\left(\int_{0}^{\infty}\left(Tf\right)^q(x)u(x)\, dx\right)^{\frac{1}{q}} \leq C\left(\int_{0}^{\infty}f^p(x)v(x)\, dx\right)^{\frac{1}{p}}
\end{equation}
via \eqref{H1} with suitable modifications. However, it was observed in \cite{pla1}, \cite{pla2} that the Muckenhoupt condition \eqref{H2} is not suitable for these modifications. To this end, Persson and Stepanov \cite{PS} obtained an alternate condition for the inequality \eqref{H1} and by suitable modifications studied the inequality \eqref{GM}. Precisely, they proved the following
\medskip

\textbf{Theorem C.} \emph{Let $0<p \leq q <\infty$. Then the inequality \eqref{GM} with the best constant $C$ holds for $f \in \mathfrak{M}^{+}$ if and only if following holds}
\begin{equation}\label{GMC}
\mathcal{B}:= \sup_{t>0}t^{-\frac{1}{p}}\left(\int_{0}^{t}\left[T\left(\frac{1}{v(x)}\right)\right]^{\frac{q}{p}}u(x)\, dx\right)^{\frac{1}{q}} < \infty.
\end{equation}
\medskip

This was perhaps one of the motivations to obtain several equivalent conditions for the inequality \eqref{H1}. Let us mention that the inequality \eqref{GM} has also independently been studied in \cite{heinig2}, \cite{opic}, \cite{pick}.

Our aim, in this section, is to study the following bilinear type inequality involving the geometric mean operator $T$:
\begin{align}
\label{BGM}
\left(\int_{0}^{\infty}\left(Tf\right)^q(x)\left(Tg\right)^q(x)u(x)\, dx\right)^{\frac{1}{q}} &\leq C\left(\int_{0}^{\infty}f^{p_1}(x)v_1(x)\, dx\right)^{\frac{1}{p_1}}\nonumber\\
&\qquad\times\left(\int_{0}^{\infty}f^{p_2}(x)v_2(x)\, dx\right)^{\frac{1}{p_2}}.
\end{align}

We prove the following:

\begin{theorem}\label{t3.1}
Let $1< \max(p_1,p_2)<q < \infty$.Then the inequality \eqref{BGM} holds for all $f, g \, \in \mathfrak{M}^{+}$ if and only if
\begin{align*}
\tilde{\mathcal{B}}:= \sup_{t > 0}t^{-\left(\frac{1}{p_1}+\frac{1}{p_2}\right)}\left(\int_{0}^{t}\left[T\left(\frac{1}{v_1(x)}\right)\right]^{\frac{q}{p_1}}\left[T\left(\frac{1}{v_2(x)}\right)\right]^{\frac{q}{p_2}}u(x)\, dx\right)^{\frac{1}{q}}<\infty.
\end{align*}
\end{theorem}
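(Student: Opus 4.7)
The plan is to mimic the one-variable Persson--Stepanov argument underlying Theorem C, with Theorem \ref{t2.2} playing the role that the linear Hardy characterization plays there. First I would reduce to the unweighted case. Setting $F:=f\,v_1^{1/p_1}$ and $G:=g\,v_2^{1/p_2}$, one has $\|F\|_{p_1}=\|f\|_{p_1,v_1}$ and $\|G\|_{p_2}=\|g\|_{p_2,v_2}$, while the multiplicativity $T(\phi\psi)=T\phi\cdot T\psi$ (a direct consequence of the linearity of $\ln$) gives $Tf=TF\cdot[T(1/v_1)]^{1/p_1}$ and $Tg=TG\cdot[T(1/v_2)]^{1/p_2}$. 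Thus \eqref{BGM} is equivalent to
\[
\left(\int_{0}^{\infty}(TF)^{q}(TG)^{q}w(x)\,dx\right)^{1/q}\le C\,\|F\|_{p_1}\|G\|_{p_2},
\]
where $w(x):=[T(1/v_1)(x)]^{q/p_1}[T(1/v_2)(x)]^{q/p_2}u(x)$, and the claim becomes that this holds iff $\sup_{t>0}t^{-(1/p_1+1/p_2)}\left(\int_0^t w\right)^{1/q}<\infty$.

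For the sufficiency direction, I would apply Jensen's inequality to the convex function $\exp$, yielding the pointwise bounds $TF\le AF$ and $TG\le AG$. Substituting and rewriting $Ah(x)=\frac{1}{x}\int_0^x h$ gives
\[
\int_{0}^{\infty}(TF)^{q}(TG)^{q}w\,dx\;\le\;\int_{0}^{\infty}\left(\int_{0}^{x}F\right)^{q}\left(\int_{0}^{x}G\right)^{q}\frac{w(x)}{x^{2q}}\,dx,
\]
which is the bilinear Hardy inequality \eqref{H2} with weight $u_\ast:=w/x^{2q}$ and $v_1=v_2\equiv 1$, so $V_1(x)=V_2(x)=x$. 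Applying Theorem \ref{t2.2} via the equivalent condition $\tilde{A}_2$ with the choice $s_1=1/p_1,\,s_2=1/p_2$ (admissible since $1<p_1,p_2<\infty$), the power of $t$ in its integrand collapses to $q(1/p_1'+1/p_1)+q(1/p_2'+1/p_2)-2q=0$, and the condition reduces precisely to $\sup_{x>0}x^{-(1/p_1+1/p_2)}\left(\int_0^x w\right)^{1/q}$, i.e.\ to $\tilde{\mathcal{B}}<\infty$.

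For the necessity direction, I would test \eqref{BGM} against the pair
\[
f_\varepsilon(x):=\chi_{(0,t)}(x)\,v_1(x)^{-1/p_1}+\varepsilon\,\eta_1(x),\qquad g_\varepsilon(x):=\chi_{(0,t)}(x)\,v_2(x)^{-1/p_2}+\varepsilon\,\eta_2(x),
\]
for fixed $t>0$, where $\eta_1,\eta_2\in\mathfrak{M}^+$ are auxiliary functions chosen so that $\|\eta_i\|_{p_i,v_i}<\infty$ and so that the tails outside $(0,t)$ do not affect the leading asymptotics of the right-hand side. As $\varepsilon\downarrow 0$, the right-hand side tends to $t^{1/p_1}\,t^{1/p_2}$, whereas on $(0,t)$ one has $Tf_\varepsilon\to[T(1/v_1)]^{1/p_1}$ and $Tg_\varepsilon\to[T(1/v_2)]^{1/p_2}$, so Fatou's lemma bounds the left-hand side below by $\left(\int_0^t w\right)^{1/q}$. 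Taking the supremum over $t>0$ yields $\tilde{\mathcal{B}}\le C<\infty$. The main obstacle I expect lies in this necessity step: since $T$ is highly sensitive to small values of its argument through $\ln$, the regularization $\varepsilon\eta_i$ must be calibrated so that the contribution of $(t,\infty)$ to the $\ln$-integrals defining $Tf_\varepsilon$ and $Tg_\varepsilon$ becomes negligible uniformly in $x\in(0,t)$ as $\varepsilon\downarrow 0$; once this delicate limiting procedure is carried out, the identification with $\tilde{\mathcal{B}}$ is automatic from Step 1.
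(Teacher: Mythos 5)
Your proposal is correct, but it follows a genuinely different route from the paper. The paper's proof is an iteration argument: it fixes $g$, applies the one-dimensional Theorem C to the linear inequality in $f$ with weight $(Tg)^q u$, interchanges suprema, applies Theorem C again in $g$ with weight $\chi_{(0,t_1)}[T(1/v_1)]^{q/p_1}u$, and then collapses the resulting double supremum over $t_2<t_1$ to a single one by monotonicity of $t_1^{-1/p_1}$. Your argument is instead the bilinear analogue of the original Persson--Stepanov proof: reduce to weight-free right-hand sides via the multiplicativity of $T$, obtain sufficiency from Jensen's bound $TF\le AF$ combined with Theorem \ref{t2.2} (your computation that $\tilde A_2$ with $V_1=V_2=\mathrm{id}$, $s_i=1/p_i$ collapses to $\tilde{\mathcal B}$ is correct, and is exactly the kind of use of the equivalent conditions that motivates Section 3), and necessity from test functions. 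Two remarks. First, the ``main obstacle'' you anticipate in the necessity step does not exist: $(Tf)(x)$ depends only on $f\vert_{(0,x)}$, so for $x\in(0,t)$ the choice $f=\chi_{(0,t)}v_1^{-1/p_1}$ already gives $(Tf)(x)=[T(1/v_1)(x)]^{1/p_1}$ exactly, the tail on $(t,\infty)$ never enters the $\ln$-integral, and no regularization $\varepsilon\eta_i$ or Fatou argument is needed; one simply restricts the left-hand integral to $(0,t)$ and reads off $\bigl(\int_0^t w\bigr)^{1/q}\le C\,t^{1/p_1+1/p_2}$. Second, to invoke Theorem \ref{t2.2} you should note that its standing hypothesis $U(x)=\int_x^\infty w(t)t^{-2q}\,dt<\infty$ follows from $\tilde{\mathcal B}<\infty$ by a dyadic decomposition, using $1/p_1+1/p_2<2$. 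With these observations your proof is complete; compared with the paper's, it is self-contained modulo Theorem \ref{t2.2} rather than modulo Theorem C, at the cost of the Jensen step and the explicit test-function computation.
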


\begin{proof}
	By using \eqref{GMC} from Theorem C, we get
	\begin{align*}
	C&= \sup_{g \in \mathfrak{M}^{+}}\sup_{f \in \mathfrak{M}^{+}}\frac{\left(\int_{0}^{\infty}\left(Gf\right)^q(x)\left(Gg\right)^q(x)u(x)\, dx\right)^{\frac{1}{q}}}{\Vert f \Vert_{p_1,v_1}\Vert g \Vert_{p_2,v_2}} \\
	&\approx \sup_{g \in \mathfrak{M}^{+}}\Vert g \Vert_{p_2,v_2}^{-1}\sup_{t_1 > 0}t_1^{-\frac{1}{p_1}}\left(\int_{0}^{t_1}\left[G\left(\frac{1}{v_1(x)}\right)\right]^{\frac{q}{p_1}}(Gg)^q(x)u(x)\, dx\right)^{\frac{1}{q}} \\
	&=\sup_{t_1 > 0}t_1^{-\frac{1}{p_1}}\sup_{g \in \mathfrak{M}^{+}}\Vert g \Vert_{p_2,v_2}^{-1}\left(\int_{0}^{t_1}\left[G\left(\frac{1}{v_1(x)}\right)\right]^{\frac{q}{p_1}}(Gg)^q(x)u(x)\, dx\right)^{\frac{1}{q}} \\
	&\approx \sup_{t_1 > 0}t_1^{-\frac{1}{p_1}}\sup_{0<t_2<t_1}t_2^{-\frac{1}{p_2}}\left(\int_{0}^{t_2}\left[G\left(\frac{1}{v_1(x)}\right)\right]^{\frac{q}{p_1}}\left[G\left(\frac{1}{v_2(x)}\right)\right]^{\frac{q}{p_2}}u(x)\, dx\right)^{\frac{1}{q}} \\
	&=\sup_{t > 0}t^{-\left(\frac{1}{p_1}+\frac{1}{p_2}\right)}\left(\int_{0}^{t}\left[G\left(\frac{1}{v_1(x)}\right)\right]^{\frac{q}{p_1}}\left[G\left(\frac{1}{v_2(x)}\right)\right]^{\frac{q}{p_2}}u(x)\, dx\right)^{\frac{1}{q}} \\
	&=\tilde{\mathcal{B}}.
	\end{align*}
	
\end{proof}

\section{Acknowledgement}
The work of first author,  has been partially supported by Shota Rustaveli National Science Foundation of Georgia (SRNSFG) [grant number FR17-589], by Grant 18-00580S of the Czech Science Foundation and RVO:67985840.

The second author acknowledges the MATRICS Research Grant No. MTR/2017/000126 of SERB, Department of Science and Technology, India.

\bigskip


\noindent {\it Amiran Gogatishvili} \\
Institute of Mathematics\\
of the Czech Academy of Sciences \\
 \v Zitn\' a 25, 115 67 Prague 1\\
Czech Republic\\
Email: gogatish@math.cas.cz
\bigskip

\noindent {\it Pankaj Jain} \\
Department of Mathematics\\
South Asian University\\
Chanakya Puri, New Delhi-110021\\
India\\
Email: pankaj.jain@sau.ac.in \&  pankajkrjain@hotmail.com
\bigskip

\noindent {\it Saikat Kanjilal} \\
Department of Mathematics\\
South Asian University\\
Chanakya Puri, New Delhi-110021\\
India\\
Email: saikatkanjilal@students.sau.ac.in \&  saikat.kanjilal.07@gmail.com

\end{document}